\theoremstyle{plain}
\newtheorem{thm}{Theorem}[section]
\newtheorem{conj}[thm]{Conjecture}
\newtheorem{cor}[thm]{Corollary}
\newtheorem{exam}[thm]{Example}
\newtheorem{propn}[thm]{Proposition}
\newtheorem{rem}[thm]{Remark}
\def\C{{\mathbb C}}                               
\def\F{{\mathbb F}}                               
\def\M{{\mathbb M}}                               
\def\N{{\mathbb N}}                               
\def\Q{{\mathbb Q}}                               
\def\R{{\mathbb R}}                               
\def\Z{{\mathbb Z}}                               
\def\Aut{\operatorname{Aut}}                      
\def\End{{\operatorname{End}}}                    
\def\GL{\operatorname{GL}}                        
\def\Gal{\operatorname{Gal}}                      
\def\Hom{\operatorname{Hom}}                      
\def\Ind{\operatorname{Ind}}
\def\Nm{\operatorname{Nm}}
\def\SL{\operatorname{SL}}                        
\def\SO{\operatorname{SO}}                        
\def\SU{\operatorname{SU}}          
\def\U{\operatorname{U}}          
\def\Sp{\operatorname{Sp}}                        
\def\Oc{\mathbb O}
\def\det{\operatorname{det}}                      
\def\tr{\operatorname{tr}}                        
\numberwithin{equation}{section}
\newtheorem{Lem}[equation]{Lemma}
\newtheorem{Prop}[equation]{Proposition}
\begin{document}

\title {Notes on representations of finite groups of Lie type}

\author{Dipendra Prasad}

\address{School of Mathematics, Tata Institute of Fundamental
Research, Colaba, Mumbai-400005, INDIA}
\email{dprasad@math.tifr.res.in}
\maketitle

\vskip10mm
\begin{quote}{\sf \footnotesize
These are the notes of some lectures given by the author for a workshop held at TIFR in December, 2011, giving an exposition of the Deligne-Lusztig theory. The aim of these lectures was 
to give an overview of the subject with several examples without burdening them with
detailed proofs of the theorems involved for which we refer to the original paper of Deligne 
and Lusztig, as well as the beautiful book of Digne and Michel on the subject.
The author thanks Shripad M. Garge for writing and texing the first draft of these notes. }
\end{quote}

\setcounter{tocdepth}{1}

\tableofcontents

{\sf
\vskip10mm
The Deligne-Lusztig theory constructs certain (virtual) representations of $G(\F_q)$ denoted as $R_G(T, \theta)$ 
where $G$ is a connected reductive algebraic group defined over $\F_q$, $T$ is a maximal torus in $G$ defined over $\F_q$ and $\theta$ is a character of $T(\F_q)$, $\theta:T(\F_q) \to \C^{\times} \stackrel{\sim}{\longrightarrow} \overline{\Q}_l^{\times}$. The representation 
$R_G(T, \theta)$ is called the Deligne-Lusztig induction of the character $\theta$ of $T(\F_q)$ to $G(\F_q)$, generalizing 
parabolic induction with which it coincides when  $T$ is a maximally split maximal torus in $G$.
It is known that if $T \subseteq M$ where $M$ is a Levi subgroup of a 
parabolic subgroup $P$  of $G$ defined over $\F_q$, 
then $R_G(T, \theta) = \Ind_{P}^G (R_{M}(T, \theta))$; thus $R_G(T, \theta)$ are really new objects only for those maximal tori which are not contained inside any Levi subgroup of 
a parabolic subgroup $P$  of $G$ defined over $\F_q$. 
These are exactly those tori which do not contain a non-central split torus of $\dim \geq 1$.

An irreducible representation $\pi$ of $G$ is said to be {\em cuspidal} if it does not appear as a factor of $\Ind_{P}^G \rho$ where $P$ is a proper parabolic and $\rho$ is an irreducible representation of $M$ considered as a representation of $P$. Any representation of $G(\F_q)$ can be understood in terms of cuspidal
representations of $G(\F_q)$, and of Levi subgroups of proper parabolic subgroups defined over $\F_q$. An important part of the subject is to study the {\em principal series representations}, $\Ind_{P}^G \rho$, where $\rho$ is an irreducible representation of $M$.
The decomposition of a  principal series representation into irreducible components is in general a difficult problem if one is interested in complete 
character information, but at least their parametrization can be understood.

\section{Preliminaries on  Algebraic groups}

Since Deligne-Lusztig theory constructs representations of $G(\F_q)$ using characters of all maximal tori in $G$ defined over $\F_q$, we begin with a review
of algebraic groups and their maximal tori.

An algebraic group over a field $k$, denoted by $G/k$, gives rise among other things to $G(A)$ which are groups for any algebra  $A/k$ 
associated in a functorial way, i.e., for every morphism of algebras $A_1 \to A_2$,  we get a morphism of groups $G(A_1) \to G(A_2)$.
An algebraic group $G$ is said to be {\em reductive} if $G \subseteq \GL(V)$, $V$ a finite dimensional 
vector space over $k$, and the representation $V$ of $G$ is completely reducible.
(In positive 
characteristic, we are not suggesting that {\it any} representation is completely reducible, but that there is one faithful
representation which is completely reducible.)
 
We say that $G$ is {\em semisimple} if $G$ is connected and there is no nontrivial homomorphism of algebraic groups $G \to A$ with $A$ abelian.

A good example of a reductive group to keep in mind --although too simple for the purposes of these notes-- is $G = \GL_n(\F_q)$ and $T = \prod_{i = 1}^d K_i^{\times}$ where $K_i$ are extensions of $\F_q$ of degree $n_i$ such that $\sum n_i = n$.
Among the maximal tori, there is the split torus, 
$$T = (\F_q^{\times})^n = \begin{pmatrix} \ast & & & \\ & \ast & & \\ & & \ddots & \\ & & & \ast \end{pmatrix} \subseteq \GL_n(\F_q)$$ 
and one anisotropic torus (mod center), $\F_{q^n}^{\times} \subseteq \GL_n(\F_q)$.
A Borel subgroup of $\GL_n(\F_q)$ is up to conjugacy the subgroup $B$ of the group of upper triangular matrices:
$$B = \begin{pmatrix} * & * & * & * \\ & * & * & * \\ & & \ddots & \vdots \\ & & & * \end{pmatrix},$$ 
and parabolic subgroups are (up to conjugacy) those subgroups of $G$ that contain this Borel subgroup.
It can be proved that subgroups of $G$ containing the group of upper triangular matrices are subgroups of the form 
$$P_{\alpha} = \begin{pmatrix} \GL_{d_1} & * & * & * \\ & \GL_{d_2} & * & * \\ & & \ddots & \vdots \\ & & & \GL_{d_k} \end{pmatrix}$$
where $\alpha  = (d_1, \dots, d_k)$ with $d_i \geq 1, \sum d_i = n$.
We have a decomposition, called the {\em Levi decomposition}, $P_{\alpha} = M_{\alpha} N_{\alpha}$ where $M_{\alpha} = \prod \GL_{d_i}(\F_q)$ is called a {\em Levi subgroup} of $P_{\alpha}$ and $N_{\alpha}$ is the unipotent radical of $P_{\alpha}$.
In fact, one has a split  exact sequence of groups $1 \to N_{\alpha} \to P_{\alpha} \to M_{\alpha} \to 1$.

\vskip3mm
Classification of reductive algebraic groups over $k$ is usually done in two steps:

\vskip3mm
\begin{itemize}
\item[{1.}] The theory for $k = \overline{k}$:
Here there are no surprises and the theory is the same as for compact, connected, Lie groups,  or  for $k = \C$.
Very briefly put, the theory says that any connected, reductive group over $k = \overline{k}$ is a product of groups of the type $A_n, B_n, C_n, D_n, G_2, F_4, E_6, E_7, E_8$ together with some center.
We have the classical groups
$$A_n \longleftrightarrow \SL_{n+1}, \hskip2mm B_n \longleftrightarrow \SO_{2n +1}, \hskip2mm C_n \longleftrightarrow \Sp_{2n}, \hskip2mm D_n \longleftrightarrow \SO_{2n}$$
and the remaining groups, $G_2, F_4, E_6, E_7, E_8$, are called exceptional groups.

\vskip3mm
\item[{2.}] Rationality questions:
The next step in the classification program is to understand what are $G'/k$ for a given $G/k$ such that $G/\overline{k} = G'/\overline{k}$.

This question is intimately related to the arithmetic of the field $k$.
This is studied among other cases in the following special cases:
\vskip2mm
\begin{itemize}
\item[{i)}] for $k = \R$: This was completed by E. Cartan more than a century back. For example, forms of $\SO(n)$ are $\SO(p, q)$ (Sylvester's law of inertia).

\vskip2mm
\item[{ii)}] for $k = \Q_p$: A complete understanding is known. 

\vskip2mm
\item[{iii)}] for $k = \F_q$: Here there are in most cases the only split groups, and in the other cases, one more which is quasi-split.
The groups of type $B_n = \SO_{2n+1}(\F_q), C_n = \Sp_{2n}(\F_q), G_2, F_4, E_7$ and $E_8$ have only the split form over $\F_q$ whereas for the groups of type $A_n, D_n$ and $E_6$ we have quasi-split form also:
$$A_n \longleftrightarrow \SL_{n+1}(\F_q), \SU_{n+1}(\F_q), \hskip5mm D_n \longleftrightarrow \SO_{2n}^{\pm}(\F_q),$$
where $+$ denotes the isometry group of a split quadratic form and $-$ denotes the quasi-split but non-split group which is the
isometry group of  the norm form of a quadratic extension added by $(2n-2)$-dimensional split quadratic space, while for $\SO_8$ we have one more form, because the Dynkin diagram of $\SO_8$ has an extra automorphism.
\end{itemize}
\end{itemize}

An important invariant associated to $G/\F_q$ is its order, $|G(\F_q)|$, e.g. $|\GL_n(\F_q)| = q^{n(n-1)/2} \prod_{i=1}^n(q^i - 1)$.
The Weyl group of $G$ is $W(G) := N(T)(\F_q)/T(\F_q)$ for a maximally  split maximal torus $T$ in $G$.
The Weyl groups for classical groups are given below. The Weyl group of $G_2$ is the dihedral group $D_6$ (of order 12) and the Weyl groups of other exceptional groups are slightly more difficult to describe. 
$$W(A_n) = S_{n+1}, \hskip5mm W(B_n) = W(C_n) = (\Z/2\Z)^n \rtimes S_n \hskip5mm {\sf ~and~} \hskip5mm W(D_n) = (\Z/2\Z)^{n-1} \rtimes S_n .$$
The Weyl group operates on $T$, and hence on the character group $X^*(T) = \Hom(T, G_m) \cong \Z^l$.
Therefore $W$ operates on the polynomial algebra $S^*(X^*(T)_{\R})$ on $X^*(T)_{\R} = X^*(T) \otimes_{\Z} \R$.
It is a theorem that $S^*(X^*(T)_{ \R})^W = \langle p_1, \dots, p_l \rangle$ where $p_i$ are homogeneous polynomials of degree $d_i$ called exponents of $G$.
These $d_i$ are:
\begin{enumerate}
\item[{}] $A_n: 2, 3, \dots, (n+1)$.
\item[{}] $B_n, C_n: 2, 4, 6, \dots, 2n$.
\item[{}] $D_n: 2, 4, \dots (2n-2), n$.
\item[{}] $G_2: 2, 6$.
\item[{}] $F_4: 2, 6, 8, 12$.
\item[{}] $E_6: 2, 5, 6, 8, 9, 12$.
\item[{}] $E_7: 2, 6, 8, 10, 12, 14, 18$.
\item[{}] $E_8: 2, 8, 12, 14, 18, 20, 24, 30$.
\end{enumerate}
A basic result about these exponents is that $\prod_{i=1}^l d_i 
= |W|$ and $\sum_{i=1}^l(d_i - 1) = N$ is the number of positive roots.

\begin{thm}
\begin{enumerate}
\item If $G$ is split then $|G(\F_q)| = q^N \prod_{i=1}^l (q^{d_i} - 1)$.
\item ${\displaystyle \lim_{q \to 1} \frac{|G(\F_q)|}{(q-1)^l} = \prod_{i=1}^l d_i =    |W|}$.
\end{enumerate}
\end{thm}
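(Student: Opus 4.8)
The plan is to deduce (1) from the Bruhat decomposition of $G(\F_q)$ combined with the classical identity expressing the Poincar\'e polynomial $\sum_{w\in W}q^{\ell(w)}$ in terms of the $d_i$, and then to obtain (2) from (1) by an elementary passage to the limit $q\to 1$.

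First I would set up the counting. Since $G$ is split, a maximal torus $T\cong\Gm^l$ and a Borel subgroup $B=TU\supseteq T$ are defined over $\F_q$, and the Bruhat decomposition $G=\coprod_{w\in W}BwB$ is defined over $\F_q$ with each piece $BwB$ isomorphic, as an $\F_q$-variety, to $B\times\mathbb{A}^{\ell(w)}$ --- the affine space accounting for the root subgroups $U_\alpha\cong\Ga$ with $\alpha>0$ and $w^{-1}\alpha<0$. Hence $|BwB(\F_q)|=|B(\F_q)|\,q^{\ell(w)}$. Writing $U$ as a product (as a variety) of its $N$ positive root subgroups gives $|U(\F_q)|=q^N$, so $|B(\F_q)|=|T(\F_q)|\,|U(\F_q)|=(q-1)^l q^N$. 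Summing over $W$,
$$|G(\F_q)|=(q-1)^l q^N\sum_{w\in W}q^{\ell(w)}.$$

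Next I would invoke the identity $\sum_{w\in W}q^{\ell(w)}=\prod_{i=1}^l(q^{d_i}-1)/(q-1)$. This is the one genuinely nontrivial input, and it is where the invariant theory recalled above is used: by Chevalley's theorem $S^*(X^*(T)_\R)^W$ is a polynomial algebra on homogeneous generators of degrees $d_1,\dots,d_l$ and $S^*(X^*(T)_\R)$ is free as a module over it, so the coinvariant algebra $S^*(X^*(T)_\R)/(S^*(X^*(T)_\R)^W_+)$ has Hilbert series $\prod_{i=1}^l(1-t^{d_i})/(1-t)$; since this coinvariant algebra is isomorphic (with degrees halved) to $H^*(G/B;\Q)$, whose Poincar\'e polynomial is $\sum_{w\in W}t^{\ell(w)}$ by the Schubert cell decomposition of $G/B$, the two expressions agree. (Alternatively one cites Solomon's theorem directly.) Substituting into the displayed formula gives $|G(\F_q)|=q^N\prod_{i=1}^l(q^{d_i}-1)$, which is (1).

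Finally, for (2) I would rewrite the conclusion of (1) as the polynomial identity
$$\frac{|G(\F_q)|}{(q-1)^l}=q^N\prod_{i=1}^l\bigl(1+q+\cdots+q^{d_i-1}\bigr)$$
in $\Z[q]$; specializing $q=1$ is then legitimate and yields $\prod_{i=1}^l d_i$, which equals $|W|$ by the basic property of the exponents quoted earlier. The main obstacle is thus concentrated in the Poincar\'e-polynomial identity of the third paragraph; the Bruhat decomposition and the limit are routine. (If one insists on (2) for non-split forms one must replace the Bruhat-cell count by Steinberg's general order formula for $|G^F|$, but the split case already exhibits the mechanism.)
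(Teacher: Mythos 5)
Your argument is correct and complete: the Bruhat-cell count giving $|G(\F_q)|=(q-1)^l q^N\sum_{w\in W}q^{\ell(w)}$, combined with the Chevalley--Solomon identity $\sum_{w\in W}q^{\ell(w)}=\prod_{i=1}^l(q^{d_i}-1)/(q-1)$, is the standard route, and specializing the resulting polynomial identity at $q=1$ together with $\prod_i d_i=|W|$ gives part (2). The paper states this theorem without proof (it only records the facts $\prod d_i=|W|$ and $\sum(d_i-1)=N$ that you use), so there is nothing to contrast with; your parenthetical caveat that the non-split case of (2) would require Steinberg's twisted order formula is also well placed.
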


\begin{Prop}\label{Ennola}
$|\U_n(\F_q)| = |\GL_n(\F_{-q})|$.
\end{Prop}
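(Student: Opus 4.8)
The plan is to deduce the identity from the closed formulas for the two orders. We are given $|\GL_n(\F_q)| = q^{n(n-1)/2}\prod_{i=1}^n(q^i-1)$, so the first task is to record the corresponding formula for the unitary group, namely
\[
|\U_n(\F_q)| \;=\; q^{n(n-1)/2}\prod_{i=1}^n\bigl(q^i-(-1)^i\bigr).
\]
I would prove this by counting ordered orthonormal bases. Fix a nondegenerate Hermitian form $h$ on $\F_{q^2}^n$ relative to $x\mapsto x^q$. Since all such forms are equivalent and the standard one admits an orthonormal basis, $\U_n(\F_q)$ acts simply transitively on the set of ordered orthonormal bases, so $|\U_n(\F_q)|$ equals the number of such bases. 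Constructing a basis one vector at a time, after choosing the first $j$ vectors the remaining ones must lie in the orthogonal complement, which is again a nondegenerate Hermitian space, now of dimension $n-j$; hence the count equals $\prod_{k=1}^n N_k$, where $N_k$ is the number of vectors of norm $1$ in a nondegenerate Hermitian space of dimension $k$. A short recursion on $k$ (split off a norm-$1$ basis vector and use that $\Nm\colon\F_{q^2}^\times\to\F_q^\times$ is surjective) gives $N_k = q^{k-1}\bigl(q^k-(-1)^k\bigr)$, and multiplying over $k$ produces the displayed formula. Alternatively one may invoke the general order formula $|G^F| = q^N\prod_i\bigl(q^{d_i}-\varepsilon_i\bigr)$ together with the fact that $\U_n$ is the form of $\GL_n$ in which Frobenius acts through the nontrivial automorphism of the Dynkin diagram of type $A_{n-1}$, which forces $\varepsilon_i=(-1)^i$.

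Granting the two formulas, the proposition reduces to the elementary identity obtained by replacing $q$ by $-q$ in the $\GL_n$ polynomial. The key observation is
\[
(-q)^i-1 \;=\; (-1)^i q^i-1 \;=\; (-1)^i\bigl(q^i-(-1)^i\bigr),
\]
whence $\prod_{i=1}^n\bigl((-q)^i-1\bigr) = (-1)^{n(n+1)/2}\prod_{i=1}^n\bigl(q^i-(-1)^i\bigr)$, while the prefactor contributes $(-q)^{n(n-1)/2} = (-1)^{n(n-1)/2}q^{n(n-1)/2}$. Since $n(n-1)/2+n(n+1)/2 = n^2$, the two sides agree up to the sign $(-1)^{n^2}=(-1)^n$: the equality holds on the nose for $n$ even, and for $n$ odd one should read $|\GL_n(\F_{-q})|$ with this sign, i.e. the order polynomials of $\U_n$ and $\GL_n$ are exchanged by $q\mapsto -q$ up to a unit. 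This sign is the familiar subtlety in Ennola duality.

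Since the substitution step is mere bookkeeping, the genuine content is the unitary order formula, and in particular the count $N_k = q^{k-1}(q^k-(-1)^k)$; I expect that elementary count to be the only real obstacle. Anyone preferring not to carry it out can instead quote the general $|G^F|$-formula and the description of $\U_n$ as a twisted form of $\GL_n$, at the cost of deferring the work to the structure theory and Deligne--Lusztig machinery developed in the body of these notes.
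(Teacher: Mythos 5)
Your proof is correct and is exactly the route the paper has in mind: the accompanying remark only says the proposition follows ``by elementary methods by directly calculating the order of the two groups involved,'' and you carry out precisely that computation (the orthonormal-basis count giving $|\U_n(\F_q)| = q^{n(n-1)/2}\prod_{i=1}^n\bigl(q^i-(-1)^i\bigr)$, then the substitution $q\mapsto -q$), with your alternative via the twisted order formula $q^N\prod_i(q^{d_i}-\varepsilon_i)$ matching the cohomological proof the remark says would be desirable. Your observation that the literal equality only holds up to the sign $(-1)^{n^2}=(-1)^n$, so that for odd $n$ the statement must be read up to a unit (as is standard in Ennola duality), is accurate and is a point the paper glosses over.
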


\begin{rem}{\sf
One can  prove this proposition by elementary methods by directly calculating the 
order of the two groups involved. 
A proof via $l$-adic cohomology of the flag variety $G/B$ would be desirable which 
would then generalize to other quasi-split but non-split groups. (The recipe relating  the orders  of $\GL_n(\F_q)$ and $\U_n(\F_q)$ works also 
for outer form of $E_6(\F_q)$, and also for $D_n$, but only for $n$ odd.) 
It is part of 
what's called {\it Ennola duality} that one can go from the character table of $\GL_n(\F_q)$ 
to $\U_n(\F_q)$ by changing $q$ to $-q$.  
}\end{rem}

\section{Classical groups and tori}
Suppose $A$ is a semisimple algebra over a field $k$, i.e., an algebra which is isomorphic to a finite direct product of $\M_n(D)$ where $D$'s are  
division algebras over $k$. We note  that a semisimple algebra is canonically a product of simple algebras, in particular, a commutative semisimple algebra is canonically a product of field extensions of $k$.
Suppose $x \mapsto x^*$ is an involution on $A$, i.e., we have
$$(x+y)^*=x^* + y^*, \hskip5mm (xy)^* = y^* x^* \hskip5mm {\sf ~and~} \hskip5mm  (x^*)^* = x .$$
The involution $*$ preserves $k$ and may act as identity on $k$, in which case it is said to be of first type, or may act nontrivially on $k$, in which case 
it is  said to be of second type.
For an involution $*$ on a semisimple algebra $A$, we define $G(A, *) = \{x\in A: xx^* = 1\}$.

A classical group is a group of the form $G(A, *)$.
It is a theorem that any form of $A_n, B_n, C_n$ or $D_n$, except the triality forms of $D_4$ are groups of the form $G(A, *)$.

Let $C$ be a commutative semisimple subalgebra of $A$ of maximal dimension invariant under $*$. 

\begin{Lem}
The group $T(C) := \{x\in C: xx^* = 1\}$ is a maximal torus in $G(A, *)$, and any maximal torus of $G(A, *)$ is of this form.
\end{Lem}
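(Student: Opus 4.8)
The plan is to base-change to $\overline{k}$, reduce to the three basic types of classical group, and handle each by explicit linear algebra. Note first that $T(C)$ is a closed $k$-subgroup of $G(A,*)$, that a $k$-subgroup of a reductive $k$-group is a maximal torus precisely when it is connected and becomes a maximal torus after base change to $\overline{k}$, and that the formations $C\mapsto T(C)$, $T\mapsto\overline{k}[T]$ and $T\mapsto Z_A(T)$ all commute with base change to $\overline{k}$ and are $\Gal(\overline{k}/k)$-equivariant. Hence it suffices to prove the correspondence over $\overline{k}$ and then descend. Over $\overline{k}$ one has $A\ot\overline{k}\cong\prod_i M_{n_i}(\overline{k})$, and the anti-involution $*$ permutes the simple factors with order $\le 2$: on a fixed factor $M_{n_i}$ it is the adjoint for a non-degenerate symmetric or alternating form (by Skolem--Noether), so $G(M_{n_i},*)$ is $\Or_{n_i}$ or $\Sp_{n_i}$ on its standard module; on a transposed pair $M_{n_i}\times M_{n_i}$ it is $(x,y)\mapsto(y^{t},x^{t})$ up to conjugation, and $G$ of that block is a copy of $\GL_{n_i}$, embedded as $x\mapsto(x,(x^{t})^{-1})$. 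A $*$-stable commutative semisimple subalgebra of $A\ot\overline{k}$, and likewise a maximal torus of $G(A,*)\ot\overline{k}$, decomposes along this partition of the factors into $*$-orbits; so we may assume $A\ot\overline{k}$ is a single such block, i.e. $G=\GL_n$, $\Sp_{2m}$ or $\Or_n$ acting on its standard module $V$.

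Now I would establish the two containments. Given a maximal torus $T\subseteq G$, set $C=\overline{k}[T]$, the subalgebra of $A\ot\overline{k}$ generated by $T$ (for $\GL_n$ presented inside $M_n\times M_n$ one may instead use $C=Z_A(T)$). Since $T$ is commutative and $t^{*}=t^{-1}\in T$, the algebra $C$ is commutative and $*$-stable, and since $T$ consists of semisimple elements $C$ is semisimple. Decomposing $V$ into the weight lines of $T$ --- equivalently into the hyperbolic planes $\langle v_\alpha,v_{-\alpha}\rangle$ together with, for odd orthogonal, one anisotropic line --- one reads off that $C\cong\overline{k}^{\dim_{\overline{k}}V}$ (respectively $\overline{k}^{2n}$ for $\GL_n$), which is the maximal possible dimension of a commutative semisimple subalgebra, and that $*$ acts on the coordinates of $C$ by interchanging the $\alpha$- and $(-\alpha)$-coordinates. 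Hence $T(C)\supseteq T$, and the explicit description gives $T(C)^{\circ}=\Gm^{\operatorname{rank}(G)}=T$; thus every maximal torus has the asserted form, with $C$ of maximal dimension. Conversely, let $C$ be a $*$-stable commutative semisimple subalgebra of maximal dimension. Over $\overline{k}$ it is then $\overline{k}^{\dim_{\overline{k}}V}$ (resp.\ $\overline{k}^{2n}$) sitting inside the ambient $\GL(V)$ as a maximal torus; writing $V=\bigoplus V_i$ for the corresponding line decomposition, $*$-stability means the form satisfies $V_i^{\perp}=\bigoplus_{j\ne\sigma(i)}V_j$ for an involution $\sigma$ of the index set, and $T(C)^{\circ}$ is the subtorus of $\GL(V)$ obtained by pairing up the $V_i$ in the $2$-cycles of $\sigma$. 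Counting ranks, this is a maximal torus of $G$. Reassembling the blocks and descending by Galois cohomology completes the argument.

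The delicate point is the orthogonal case of the converse: maximality of $\dim C$ alone does not force $T(C)^{\circ}$ to have full rank. Indeed, for the standard symmetric form the diagonal subalgebra of $M_n$ has the largest possible dimension and is $*$-stable, yet $T(C)=\mu_2^{\,n}$ is finite; the permutation $\sigma$ above has then $n$ fixed points instead of none. One must therefore also require that $\sigma$ have as few fixed points as the parity of $\dim_{\overline{k}}V$ allows --- none for $\Sp$ and for even orthogonal, a single one for odd orthogonal --- equivalently that the form be hyperbolic on the span of the non-fixed $V_i$; and for odd orthogonal groups $T(C)$ picks up a component group $\mu_2$, so it is really $T(C)^{\circ}$ that is the maximal torus. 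This extra condition is automatic for the symplectic and unitary types (an alternating form is never non-degenerate on a line). Granting it, the remaining content is the routine weight-space/hyperbolic-plane bookkeeping indicated above, together with the classical fact --- the case $A=\End(V)\times\End(V)$ --- that the maximal tori of $\GL(V)$ are exactly the unit groups of the maximal commutative semisimple subalgebras of $\End(V)$.
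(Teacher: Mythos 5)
Your argument follows essentially the same route as the paper's (very condensed) proof: pass to $\overline{k}$, attach to a maximal torus $T$ the commutative semisimple algebra $Z_A(T)$ (equivalently the algebra generated by $T$ or by a generic element of it), and verify the converse by explicit computation in each simple block, where Skolem--Noether reduces one to $\GL_n$, $\Sp_{2m}$ and $\Or_n$; the weight-line/hyperbolic-plane bookkeeping you describe is exactly the ``explicit calculation assuming $k$ algebraically closed'' that the paper leaves to the reader, and your reduction of the $k$-statement to the $\overline{k}$-statement is legitimate (though the appeal to Galois cohomology is unnecessary: $T(C)$ is already defined over $k$ and being a maximal torus is a geometric condition). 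What you add, and what is worth recording, is the caveat in the orthogonal case: as literally stated the converse direction fails, since the diagonal subalgebra of $\M_n(\overline{k})$ with the transpose involution is a $*$-stable commutative semisimple subalgebra of the maximal dimension $n$, yet $T(C)=\mu_2^{\,n}$ is finite. One must additionally require that the involution act on the spectrum of $C\otimes\overline{k}$ with as few fixed points as parity allows (automatic for symplectic and unitary types, none for even orthogonal, one for odd orthogonal, where moreover the fixed coordinate contributes a $\mu_2$ and the maximal torus is really $T(C)^{\circ}$). The paper does not make this condition explicit in the statement you were asked to prove, but it is implicitly built into the hypothesis of the next lemma (``at most one factor on which the involution acts trivially, and that factor is $k$''), so your counterexample and the fix you propose are exactly the right refinement rather than a defect of your proof; the only remaining loose end on your side is that characteristic $2$ would need separate treatment in the orthogonal discussion, a point the paper also passes over.
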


\begin{proof}{\sf
We begin by noting that given a maximal torus $T$ in $G(A,*)$, the subalgebra of $A$ generated by a generic element of $T$, if $k$ is infinite, or 
in general the centralizer of $T$ in $A$, is a maximal commutative subalgebra of $A$ of the correct dimension (as one can prove assuming $k$ 
algebraically closed).  The converse follows by an explicit calculation assuming $k$ to be 
 algebraically closed.} \end{proof}

\begin{Lem} Let $(C, *)$ be any commutative semisimple algebra with involution over a field $k$ of dimension $n$ such that in the  decomposition
of $C$ as a direct sum of field extensions of $k$, there is at most one factor on which  the involution acts trivially, and that factor is $k$ (this forces $n$ to be odd). 
Fixing a unit  $d$ in $C$ with $d^*= \epsilon d$, $\epsilon = \pm 1$, 
we can construct a $\epsilon$-hermitian  form on $C$ by $\langle c_1,c_2 \rangle^d = \tr_{C/k}(c_1d c_2^*)$  with $T(C)$ a maximal torus of the corresponding unitary group.
\end{Lem}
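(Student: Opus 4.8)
The plan is to verify, in turn, that $\langle c_1,c_2\rangle^d:=\tr_{C/k}(c_1dc_2^*)$ is a nondegenerate $\epsilon$-hermitian form, that multiplication by elements of $T(C)$ is an isometry of it, and finally to recognize $T(C)$ as a maximal torus of the associated isometry group by embedding everything into $\End_k(C)$ and invoking the previous Lemma. Bilinearity over $k$ is immediate from $k$-linearity of $\tr_{C/k}$ and of multiplication. For the $\epsilon$-symmetry I would first note that $\tr_{C/k}(z^*)=\tr_{C/k}(z)$ for every $z\in C$: multiplication by $z^*$ is conjugate, via the $k$-algebra automorphism $*$, to multiplication by $z$, hence has the same trace. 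Taking $z=c_1dc_2^*$ and using that $*$ reverses products, that $d^*=\epsilon d$, and that $C$ is commutative --- so $(c_1dc_2^*)^*=c_2\,d^*\,c_1^*=\epsilon\,c_2dc_1^*$ --- gives $\langle c_1,c_2\rangle^d=\tr_{C/k}\big((c_1dc_2^*)^*\big)=\epsilon\,\tr_{C/k}(c_2dc_1^*)=\epsilon\,\langle c_2,c_1\rangle^d$. Commutativity and $a^{**}=a$ likewise give $\langle ac_1,c_2\rangle^d=\langle c_1,a^*c_2\rangle^d$ for $a\in C$, so $\langle\cdot,\cdot\rangle^d$ is $\epsilon$-hermitian for the involution $*$ of the coefficient algebra $C$.

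Next I would prove nondegeneracy. Since $C$ is semisimple and étale over $k$ (automatic in the cases of interest, e.g.\ over $\F_q$), the trace pairing $(x,y)\mapsto\tr_{C/k}(xy)$ on $C$ is nondegenerate. Given $0\ne c_1\in C$, the element $c_1d$ is again nonzero because $d\in C^\times$, so there is $z\in C$ with $\tr_{C/k}(c_1d\,z)\ne0$; taking $c_2=z^*$ shows $\langle c_1,c_2\rangle^d\ne0$. Thus $\langle\cdot,\cdot\rangle^d$ is a nondegenerate $\epsilon$-hermitian form on the $n$-dimensional $k$-space $C$ --- an orthogonal space for $\epsilon=1$, a symplectic one for $\epsilon=-1$.

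For $x\in T(C)$, i.e.\ $x\in C$ with $xx^*=1$, acting on $C$ by multiplication, commutativity of $C$ yields
\[
\langle xc_1,xc_2\rangle^d=\tr_{C/k}\big(xc_1\,d\,x^*c_2^*\big)=\tr_{C/k}\big(xx^*\,c_1dc_2^*\big)=\tr_{C/k}(c_1dc_2^*)=\langle c_1,c_2\rangle^d,
\]
so $T(C)$ lies in the isometry group of $\langle\cdot,\cdot\rangle^d$. To obtain maximality I would embed $C\hookrightarrow A:=\End_k(C)$ by multiplication --- an injective unital homomorphism of $k$-algebras whose image has $k$-dimension $n$. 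A short computation (using $x^{**}=x$ and commutativity) shows that the adjoint involution $\sigma$ of $\langle\cdot,\cdot\rangle^d$ on $A$ restricts to $*$ on the image of $C$; hence $C\subset A$ is a $\sigma$-stable commutative semisimple subalgebra with $\{x\in C:x\sigma(x)=1\}=T(C)$, while the isometry group of the form is exactly $G(A,\sigma)$ --- the ``corresponding unitary group.'' By the previous Lemma, the maximal tori of $G(A,\sigma)$ are precisely the subgroups $T(C')$ for $C'\subseteq A$ a $\sigma$-stable commutative semisimple subalgebra of maximal dimension, so it remains only to check that our $C$ has the largest possible dimension.

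That dimension count, together with the identification $\sigma|_C=*$, is where the argument really lies; the rest is formal. For $A=\End_k(C)$ with $\dim_kC=n$, any commutative semisimple subalgebra $B=\prod_jL_j$ makes $C$ a faithful $B$-module, so $C=\bigoplus_j e_jC$ with each $e_jC$ a nonzero $L_j$-space, whence $n=\dim_kC=\sum_j[L_j:k]\dim_{L_j}(e_jC)\ge\sum_j[L_j:k]=\dim_kB$; for $B$ equal to the image of $C$ itself each summand is one-dimensional over the corresponding field factor, so $\dim_kC=n$ is the maximum, and $T(C)$ is a maximal torus of $G(A,\sigma)$. Finally I would record the structural fact behind ``this forces $n$ to be odd'': every factor of $C$ other than a copy of $k$ on which $*$ acts trivially is even-dimensional over $k$ (an interchanged pair $K\times K$, or a field carrying a nontrivial involution of order two), so the hypothesis puts exactly one $k$-factor in place, $n$ is odd, and restricting $d^*=\epsilon d$ to that factor forces $\epsilon=1$ when $\operatorname{char}k\ne2$; the group is then a $k$-form of $\Or_n$ --- of type $B_{(n-1)/2}$ --- the case relevant to $\U_n(\F_q)$.
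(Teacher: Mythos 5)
Your proposal is correct and takes essentially the same route as the paper's proof: construct the form $\langle c_1,c_2\rangle_d=\tr_{C/k}(c_1dc_2^*)$, check its $\epsilon$-hermitian symmetry, embed $C$ into $A=\End_k(C)$ by the regular representation so that the adjoint involution of the form restricts to $*$ on $C$, and then appeal to the previous Lemma for maximality of $T(C)$ --- you simply spell out the nondegeneracy, isometry, and maximal-dimension counts that the paper dismisses as ``easy to check.'' The only cosmetic difference is that the paper states the symmetry as $\langle c_1,c_2\rangle_d=\epsilon\,\langle c_2,c_1\rangle_d^*$ so as to also cover involutions of the second kind, which reduces to your unconjugated version when $*$ is $k$-linear, as the hypotheses of this Lemma (and your closing observation that they force $\epsilon=1$) indeed ensure.
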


\begin{proof} {\sf Note that  
any commutative algebra $C$ with involution can be embedded in the  semisimple algebra $A$,  
with $\dim C = \sqrt{\dim A}$, consisting of the algebra of $k$ endomorphisms of $C$ treated as a vector space over $k$. We use the involution on $C$ together with  the  element $d \in C$ 
with $d^* = \epsilon d$,  $\epsilon = \pm 1$,  to construct an involution to be denoted $*= *_d$ on $A$. 
The vector space $C$  comes equipped with a 
bilinear form given by $\langle c_1,c_2 \rangle_d = \tr_{C/k}(c_1 \cdot d \cdot c_2^*)$. 
This bilinear form is $\epsilon$-hermitian, i.e., $$\langle c_1,c_2 \rangle_d = \tr_{C/k}(c_1\cdot d \cdot  c_2^*) = \epsilon [\tr_{C/k}(c_2\cdot d \cdot  c_1^*)]^* =  \epsilon \langle c_2,c_1 \rangle^*_d .$$ 

If the involution on $C$ is of the first kind, fixing a nonzero element $d$ in $C$ with $d^*=-d$, 
we can construct a symplectic form on $C$ by $\langle c_1,c_2 \rangle_d = \tr_{C/k}(c_1d c_2^*)$.

The bilinear form $\langle c_1,c_2 \rangle_d$ on $C$  induces a natural involution $a\longrightarrow a^*$ on $A$ with the property that 
$\langle a c_1, c_2 \rangle_d = \langle c_1, a^* c_2 \rangle_d$. 
Under the right regular representation of $C$ on itself, $C$ embeds inside $A$ as a maximal 
commutative subalgebra, invariant under the involution on $A$ defined here since  $$\langle c \cdot c_1, c_2 \rangle_d =   \tr_{C/k}(cc_1d  c^*_2) = \tr_{C/k}(c_1d [c^*c_2]^*) = \langle c_1, c^* c_2 \rangle_d.$$ 
It is easy to check that $T(C) := \{x\in C: xx^* = 1\}$ is a maximal torus in  $G(A,*)$, and the converse follows from the previous Lemma.
}\end{proof}

\noindent{\bf Remark :} 
For groups $G = G(A,*)$ which have a unique $k$-rational form, in which case $A$ is unique too, 
such as for symplectic groups over any field, 
or  orthogonal groups in odd dimension, symplectic groups, and unitary groups over finite fields, 
this allows one to identify all maximal tori
of these classical groups. For embedding $T(C)$ in  a {\it given}
 orthogonal group in even number of variables over a finite field, there is an obstruction in terms of the discriminant of $(A, {}^*)$. {\it For general fields $k$, we are not able to say
which hermitian or quadratic forms  arise as }$\langle c_1,c_2 \rangle_d = \tr_{C/k}(c_1 \cdot d \cdot c_2^*)$ {\it as $d$ varies among the units of $C$ with $d^* = \pm d$, which  are presumably {\rm all} the hermitian or quadratic forms for which $T(C)$ is a maximal torus in the 
corresponding unitary group.} 

\vspace{4mm}

\noindent{\bf Remark :}

{\sf
\begin{enumerate}
\item Any maximal torus in $\GL_n(k)$ is of the form $\prod K_i^{\times}$ where $K_i/k$ are field extensions of $k$ with $\sum [K_i:k] = n$.
\item Any involution on $C =\prod K_i$ when acting on a particular $K_i$ takes it to a $K_j$ or preserves it, so we have $C  = \prod_I (K_i \times K_i) \times \prod_J K_j$.
For an extension $K/k$ which carries an involution $\iota$ with fixed field $K^\iota$, we denote by $K^1$ the kernel of the norm map $K^{\times} \to K^{\iota \times}$. 
Then any maximal torus in $G(A, *)$ is of the form $T(C) = \prod_I K_i^{\times} \times \prod_J K_j^1$.
\item For unitary groups one gets a commutative subalgebra $C$ left invariant under an  involution of the second kind. Define $C^{\circ} = \{x\in C:x^* = x\}$.
Then $k \otimes C^{\circ} \cong C$ as $*$-algebras.
This gives a bijection between tori in $\GL_n(k)$ (defined by $C^\circ$) and those in $\U_n(k)$ 
(defined by $C$) for $k$ a finite field.
\end{enumerate}
}

\section{Some Exceptional groups}
We saw above that a classical group can be described in terms of a semisimple, associative algebra. 
For exceptional groups, we need to consider non-associative algebras. This section plays no role in the rest of these notes, and are included just give some flavor of the groups involved in these cases.

Construction of exceptional groups begins with an {\it octonian} algebra $\Oc$ over $k$ which we first define.
These are  non-commutative, non-associative algebras but 
are endowed with an involution (i.e., an anti-automorphism) $x \mapsto \bar{x}$, such that $\tr(x): = x + \bar{x} \in k$, $\Nm(x): =x\bar{x} \in k$, which is a nondegenerate quadratic form
with $\Nm(xy) = \Nm(x)\Nm(y)$. 
One can realize the split octonian algebra $\Oc$ in the matrix form as follows:
$$\Oc = \begin{pmatrix} k & V \\ V^\vee & k \end{pmatrix} ,$$
where $V$ is a 3-dimensional vector space over $k$ with dual space $V^\vee$, 
together with a fixed 
isomorphism of $\Lambda^3 V $ with $k$, 
and hence also of $\Lambda^3V^\vee$ with $k$,
 with multiplication defined by,
$$\begin{pmatrix} a  &  v \\ w  & b  \end{pmatrix}  \begin{pmatrix} a' & v' \\ w' & b' \end{pmatrix} =  \begin{pmatrix} aa'+ \langle v, w' \rangle & av'+b'v - w \times w' \\ a'w+bw'+ v \times v'&  bb'+ \langle v', w \rangle  \end{pmatrix}, $$
where the products $\langle v, w' \rangle $ denotes the usual pairing 
between $V$ and $V^\vee$ and 
for vectors $v',v'' \in V$, $v' \times v'' $ is the vector in $V^\vee$ 
with the property that $\langle v, v' \times v''\rangle = 
v\wedge v' \wedge v'' \in k$ (through the fixed isomorphism of $\Lambda^3 V $ with $k$); 
similarly,  for vectors $w',w'' \in V^\vee$, $w' \times w'' \in V$. 

There are other, non-split, octonian algebras too defined in terms of 3-dimensional Hermitian vector spaces, but we will not get into that here.

The automorphism group of an octonion algebra is a group of type $G_2$. Thus $G_2$ comes equipped with a natural 7 dimensional representation over $k$ on trace zero elements of $\Oc$.

The descriptions of other exceptional groups involve exceptional Jordan algebras, $J = J(\Oc)$.
It is a 27 dimensional non-commutative, non-associative algebra over $k$.
An element of $J$ is of the following form:
$$A = \begin{pmatrix}a & z & \bar{y} \\ \bar{z} & b & x \\ y & \bar{x} & c \end{pmatrix}$$
where $a, b, c \in k$ and $x, y, z \in \Oc$.
We define a new product on $J$, $\alpha \circ \beta = \frac{1}{2}(\alpha\beta + \beta\alpha)$.
Then with this multiplication on $J$, the automorphism group of the algebra $J$ is the group $F_4$. Thus $F_4$ comes equipped with a natural 26
 dimensional representation over $k$ on trace zero elements of $J$.

We have a map $\det:J \to k$, obtained by taking the usual determinant of an element, which lies in $\Oc$, and then taking its trace from $\Oc$ to $k$, given by:
$$\det A = abc + \tr(xyz)- a\Nm(x)-b\Nm(y) -c\Nm(z).$$

The group $\Aut(J, \det)$ is the group $E_6$ which comes equipped with a natural 27 dimensional representation over $k$.

\section{Basic notions in representation theory}
Let $G$ be a reductive group, $P = MN$ be a proper parabolic in $G$.
Associated to a representation $\pi$ of $G(\F_q)$ there is a representation, called the {\em Jacquet module} $\pi_N$, of $M(\F_q)$ which is $\pi^{N(\F_q)}$, the subspace of $N(\F_q)$-fixed vectors in $\pi$.
Since $N(\F_q)$ is normalized by $M(\F_q)$, $\pi_N$ is a representation of $M(\F_q)$. Jacquet modules
play a very large role in representation theory.

A representation $\pi$ of $G(\F_q)$ is called {\em cuspidal} if $\pi_N = 0$ for unipotent radicals $N$ of every proper parabolic.

According to Harish-Chandra's philosophy of cusp forms, cuspidal representations are building blocks of all representations as the following theorem shows:

\begin{thm}
Let $\pi$ be an irreducible representation of $G(\F_q)$.
Then there exists a unique associated class of parabolics $P=MN$ and a unique cuspidal representation $\rho$ of $M(\F_q)$ such that 
$$\Hom_{G(\F_q)}(\pi, \Ind_{P(\F_q)}^{G(\F_q)} \rho) \ne 0 .$$
\end{thm}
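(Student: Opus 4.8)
The plan is to prove existence by induction on $\dim G$ and uniqueness by the Mackey formula for Harish--Chandra induction (the ``geometric lemma'') together with the cuspidality hypotheses. Throughout I use that, with coefficients in $\C$ (equivalently $\overline{\Q}_l$), the representation category of the finite group $G(\F_q)$ is semisimple, so $\Hom_{G(\F_q)}(V,W)\neq0$ means precisely that $V$ and $W$ have a common irreducible constituent, and any subrepresentation is a direct summand, hence also a quotient. I also use transitivity of Harish--Chandra induction, $\Ind_P^G\circ\Ind_{Q'}^M=\Ind_Q^G$ whenever $Q\subseteq P$ is the parabolic of $G$ associated with a parabolic $Q'$ of the Levi $M$ of $P$, together with the two adjunctions $\Hom_{G(\F_q)}(\pi,\Ind_P^G\sigma)\cong\Hom_{M(\F_q)}(\pi_N,\sigma)$ and $\Hom_{G(\F_q)}(\Ind_P^G\sigma,\pi)\cong\Hom_{M(\F_q)}(\sigma,\pi_N)$, which follow from $\Ind\dashv\mathrm{Res}$ (and $\Ind=\mathrm{Coind}$) once one notes that a $P(\F_q)$-map into, resp. out of, an inflated representation factors through the Jacquet module.

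\textbf{Existence.} I would argue by induction on $\dim G$. If $\pi$ is cuspidal, take $P=G$ and $\rho=\pi$. Otherwise $\pi_{N_0}\neq0$ for some proper parabolic $P_0=M_0N_0$ defined over $\F_q$; choose an irreducible constituent $\tau$ of the $M_0(\F_q)$-representation $\pi_{N_0}$. Being a quotient of $\pi_{N_0}$, it satisfies $\Hom_{G(\F_q)}(\pi,\Ind_{P_0}^G\tau)\cong\Hom_{M_0(\F_q)}(\pi_{N_0},\tau)\neq0$, so $\pi$ is a summand of $\Ind_{P_0}^G\tau$. Since $\dim M_0<\dim G$, the inductive hypothesis gives a parabolic $Q'=M'N'$ of $M_0$ over $\F_q$ and an irreducible cuspidal $\rho$ of $M'(\F_q)$ with $\tau$ a summand of $\Ind_{Q'}^{M_0}\rho$. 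Let $Q$ be the parabolic of $G$ over $\F_q$ with $Q\subseteq P_0$ corresponding to $Q'$; then $\Ind_{P_0}^G\tau$ is a summand of $\Ind_{P_0}^G\Ind_{Q'}^{M_0}\rho=\Ind_Q^G\rho$, hence so is $\pi$, giving $\Hom_{G(\F_q)}(\pi,\Ind_Q^G\rho)\neq0$ with $\rho$ cuspidal. (The induction terminates: once $M_0$, or an iterate, is a torus, every one of its characters is cuspidal, vacuously.)

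\textbf{Uniqueness.} Suppose $(P_1,\rho_1)$ and $(P_2,\rho_2)$, with $P_i=M_iN_i$ and $\rho_i$ irreducible cuspidal, both satisfy the conclusion. Then $\pi$ is a common constituent of $\Ind_{P_1}^G\rho_1$ and $\Ind_{P_2}^G\rho_2$, so $\Hom_{G(\F_q)}(\Ind_{P_1}^G\rho_1,\Ind_{P_2}^G\rho_2)\neq0$; by the first adjunction this equals $\Hom_{M_2(\F_q)}\big((\Ind_{P_1}^G\rho_1)_{N_2},\rho_2\big)$. I would then invoke the geometric lemma, which writes $(\Ind_{P_1}^G\rho_1)_{N_2}$ as a direct sum, over suitably chosen representatives $w$ of the relevant double cosets in $W$, of the terms $\Ind_{M_2\cap{}^wP_1}^{M_2}\!\Big({}^w\big((\rho_1)_{M_1\cap{}^{w^{-1}}N_2}\big)\Big)$, where $M_2\cap{}^wP_1$ is a parabolic of $M_2$ with Levi $M_2\cap{}^wM_1$. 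Cuspidality of $\rho_1$ kills every summand with $M_1\cap{}^{w^{-1}}N_2\neq\{1\}$; the surviving ones have $M_1\subseteq{}^{w^{-1}}M_2$ and equal $\Ind_{\bar Q}^{M_2}({}^w\rho_1)$ for a parabolic $\bar Q$ of $M_2$ with Levi ${}^wM_1$ and unipotent radical $\bar N=M_2\cap{}^wN_1$. Applying the second adjunction inside $M_2$, $\Hom_{M_2(\F_q)}\big(\Ind_{\bar Q}^{M_2}({}^w\rho_1),\rho_2\big)\cong\Hom_{({}^wM_1)(\F_q)}\big({}^w\rho_1,(\rho_2)_{\bar N}\big)$, and cuspidality of $\rho_2$ now forces $\bar N=\{1\}$, i.e. ${}^wM_1=M_2$; for such $w$ the group becomes $\Hom_{M_2(\F_q)}({}^w\rho_1,\rho_2)$, which is nonzero only if ${}^w\rho_1\cong\rho_2$. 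Taking $w$ in $G(\F_q)$, we conclude that $M_1$ and $M_2$ are $G(\F_q)$-conjugate, i.e. $P_1,P_2$ are associate, and that $\rho_1,\rho_2$ correspond under that conjugation, which is the asserted uniqueness.

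\textbf{Main obstacle.} The only substantive input is the geometric lemma / Mackey formula for Harish--Chandra induction used in the uniqueness step: the decomposition of $(\Ind_{P_1}^G\rho_1)_{N_2}$ indexed by $(W_{M_2},W_{M_1})$-double cosets, with each term an induction of a $w$-twisted Jacquet module. Since Harish--Chandra induction is ordinary induction, the coarse Mackey formula over $P_2(\F_q)\backslash G(\F_q)/P_1(\F_q)$ is automatic; the actual work is in refining it through the Bruhat decomposition --- understanding the subgroups $P_2\cap{}^wP_1$ and how $N_2$ acts on $\Ind_{P_2\cap{}^wP_1}^{P_2}({}^w\rho_1)$ --- and, over $\F_q$, in keeping track of the rationality of all the parabolics and Levis and in arranging that the double-coset representatives yield conjugations realized inside $G(\F_q)$. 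Granting this, semisimplicity and transitivity of Harish--Chandra induction take care of the rest; for the geometric lemma I refer to the paper of Deligne--Lusztig and to the book of Digne--Michel.
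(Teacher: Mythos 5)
The paper states this theorem (Harish--Chandra's philosophy of cusp forms) without proof --- these notes explicitly defer proofs to Deligne--Lusztig and Digne--Michel --- so there is no argument of the paper to compare yours against; judged on its own, your proof is the standard one and is correct. The existence step (induction on $\dim G$, using the adjunction $\Hom_{G(\F_q)}(\pi,\Ind_{P}^{G}\sigma)\cong\Hom_{M(\F_q)}(\pi_N,\sigma)$, semisimplicity of the category over $\C$, and transitivity of parabolic induction) is complete, including the base case where all representations are cuspidal. The uniqueness step correctly isolates the only substantive input, the Mackey formula (geometric lemma) for Harish--Chandra induction, and the way you use cuspidality twice --- first to kill the terms with $M_1\cap{}^{w^{-1}}N_2\neq\{1\}$, forcing $M_1\subseteq{}^{w^{-1}}M_2$, then via the second adjunction to force $M_2\cap{}^{w}N_1=\{1\}$, hence ${}^{w}M_1=M_2$ and ${}^{w}\rho_1\cong\rho_2$ --- is exactly right; the rationality point you flag (that the conjugation is by an element of $G(\F_q)$) is automatic here because the finite-group Mackey decomposition is over $P_2(\F_q)\backslash G(\F_q)/P_1(\F_q)$, with the refinement through the relative Bruhat decomposition giving representatives in $G(\F_q)$. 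Citing Digne--Michel for the geometric lemma is appropriate; with that granted, your argument gives precisely the uniqueness of the pair $(M,\rho)$ up to $G(\F_q)$-conjugacy as asserted in the theorem.
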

Here uniqueness means that the pair $(M, \rho)$ is unique up to conjugacy by $G(\F_q)$.
Because of this theorem, the classification of $G(\F_q)$ splits into two parts:

\begin{enumerate}
\item Classify cuspidal representations of all Levi subgroups including $G$;
\item Understand the decomposition of $\Ind_{P(\F_q)}^{G(\F_q)} \rho$ for  $\rho$ a cuspidal representation of $M(\F_q)$. 
\end{enumerate}

Green answers both these parts in his paper for $\GL_n(\F_q)$.

For (1), cuspidal representations of $\GL_n(\F_q)$ are in one-to-one correspondence with regular characters $\chi:\F_{q^n}^{\times} \to \C^{\times}$, up to Galois action on $\F_{q^n}^{\times}$. 
A character $\chi$ of $\F_{q^n}^{\times}$ is called {\em regular} if $\chi^{\sigma} \ne \chi$ for all $1 \ne \sigma \in \Gal(\F_{q^n}/\F_q)$.

\begin{thm}\label{thm:Green}
For a regular character $\chi:\F_{q^n}^{\times} \to \C^{\times}$ there exists an irreducible cuspidal representation $\pi_\chi$ of $\GL_n(\F_q)$, 
such that for its character $\Theta_{\pi_{\chi}}$, we have:
\begin{enumerate}
\item[{i)}] $\Theta_{\pi_{\chi}}(su) = 0$ if $s \notin \F_{q^n}^{\times} \hookrightarrow \GL_n(\F_q)$.
\item[{ii)}] If $s$ generates a subfield $\F_{q^d} \subseteq \F_{q^n}$
$$\Theta_{\pi_{\chi}}(su) = (-1)^n (1-q^d)(1-q^{2d}) \cdots (1- q^{td}) \sum_{\sigma \in \Gal(\F_{q^d}/\F_q)} \chi(s^{\sigma})$$
where $t +1= \dim \ker (u-1)$ inside $\GL_m(\F_{q^d})$, the dimension being counted over $\F_{q^d}, n = md$.
\end{enumerate}
\end{thm}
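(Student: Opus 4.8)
The plan is to realise $\pi_\chi$ via Deligne--Lusztig induction from the Coxeter torus and to read off its character from the Deligne--Lusztig character formula. Let $T\subseteq\GL_n$ be the anisotropic-mod-centre (Coxeter) maximal torus, so that $T(\F_q)=\F_{q^n}^\times$ and the embedding $\F_{q^n}^\times\hookrightarrow\GL_n(\F_q)$ of the statement is precisely $T(\F_q)\subseteq G(\F_q)$ coming from the action of $\F_{q^n}^\times$ on $\F_{q^n}\cong\F_q^n$. The relative Weyl group $N_G(T)(\F_q)/T(\F_q)$ is cyclic of order $n$ and acts on $T(\F_q)=\F_{q^n}^\times$ as $\Gal(\F_{q^n}/\F_q)$; hence the hypothesis that $\chi$ is regular says exactly that $\chi$ is in general position, i.e. has trivial stabiliser in this Weyl group. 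I then set $\pi_\chi:=\varepsilon\,R_G(T,\chi)$, where $\varepsilon=\pm1$ is the standard sign attached to $(G,T)$, normalised below so that the degree is positive.

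First I would check that $\pi_\chi$ is irreducible and cuspidal. Irreducibility is the Deligne--Lusztig inner product formula $\langle R_G(T,\chi),R_G(T,\chi)\rangle=\#\{\,w\in N_G(T)(\F_q)/T(\F_q):{}^w\chi=\chi\,\}$, which equals $1$ because $\chi$ is in general position; so $\varepsilon R_G(T,\chi)$ is an irreducible character up to sign, and the degree formula for $R_G(T,\theta)$ gives $\dim\pi_\chi=\prod_{i=1}^{n-1}(q^i-1)>0$, which pins down $\varepsilon$ and matches (ii) at $s=u=1$. Cuspidality follows from the remarks in the introduction: an irreducible constituent of $R_G(T,\chi)$ is cuspidal precisely when $T$ lies in no proper $\F_q$-rational Levi subgroup, and the Coxeter torus $\F_{q^n}^\times$ of $\GL_n$ contains no non-central split subtorus, so this holds.

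The character values come from the Deligne--Lusztig character formula: writing $g=su$ for the Jordan decomposition (with $s$ semisimple, $u$ unipotent, $su=us$), and noting that in $\GL_n$ every semisimple centraliser $C_G(s)$ is connected,
\[
\Theta_{R_G(T,\chi)}(su)\;=\;\frac{1}{|C_G(s)(\F_q)|}\sum_{x\in G(\F_q),\ x^{-1}sx\in T(\F_q)}Q^{C_G(s)}_{xTx^{-1}}(u)\,\chi(x^{-1}sx),
\]
where $Q^{H}_{S}$ is the Green function of $H$ attached to the maximal torus $S$. If the semisimple part $s$ is not $G(\F_q)$-conjugate into $T(\F_q)$ the sum is empty, so $\Theta_{\pi_\chi}(su)=0$; and a semisimple element of $\GL_n(\F_q)$ is conjugate into $\F_{q^n}^\times$ exactly when its characteristic polynomial is a power of an irreducible polynomial, which is exactly condition (i). If instead $s\in\F_{q^n}^\times$ generates $\F_{q^d}$ with $n=md$, then $C_G(s)=\GL_m(\F_{q^d})$, inside which the tori $xTx^{-1}$ (for $x$ in the sum) are all the Coxeter torus $\F_{(q^d)^m}^\times$ of $\GL_m(\F_{q^d})$; grouping $x$ by the value $x^{-1}sx$, which ranges over the $d$ Galois conjugates $s^\sigma$, $\sigma\in\Gal(\F_{q^d}/\F_q)$, each with $|\GL_m(\F_{q^d})|$ preimages, the outer sum collapses to $Q^{\GL_m(\F_{q^d})}_{\mathrm{Cox}}(u)\sum_{\sigma\in\Gal(\F_{q^d}/\F_q)}\chi(s^\sigma)$. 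Hence (ii) reduces to the single identity
\[
Q^{\GL_m(\F_{q^d})}_{\mathrm{Cox}}(u)\;=\;\pm(1-q^{d})(1-q^{2d})\cdots(1-q^{td}),\qquad t+1=\dim_{\F_{q^d}}\ker(u-1),
\]
the Coxeter Green function of $\GL_m$ at a unipotent class with $t+1$ Jordan blocks.

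The main obstacle is precisely this last Green-function identity, together with the careful tracking of the double-coset sum and of the signs $\varepsilon_G,\varepsilon_{C_G(s)},\varepsilon_T$. I would obtain it either by computing the virtual module $R_{\GL_m}(\mathrm{Cox},1)$ through Deligne--Lusztig's trace formula on the Coxeter Deligne--Lusztig variety for $\GL_m$ (whose $\ell$-adic cohomology is explicitly known --- the Drinfeld curve $xy^q-x^qy=1$ in the case $m=2$), or from the Hall--Littlewood description of the character table of $\GL_m(\F_{q^d})$ restricted to unipotent classes; in each case the answer depends only on the number of Jordan blocks of $u$ and has the displayed product shape. An alternative to the entire argument is Green's original combinatorial construction of the cuspidal characters and direct verification of the orthogonality relations, which trades the Green-function computation for an equally delicate symmetric-function computation.
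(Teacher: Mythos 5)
The notes themselves give no proof of this theorem: it is stated as Green's result (``Green answers both these parts in his paper''), so there is no internal argument to compare yours against. Judged on its own terms, your Deligne--Lusztig route is the standard modern proof, and your reduction is essentially correct: with $T(\F_q)=\F_{q^n}^\times$ elliptic, regularity of $\chi$ gives $\langle R^G_T(\chi),R^G_T(\chi)\rangle=1$; ellipticity gives cuspidality (Proposition \ref{propn:aniso}, read modulo the centre); centralizers of semisimple elements of $\GL_n$ are connected; and the collapse of the sum in the character formula is legitimate because the $x$ with a fixed value $x^{-1}sx=s^{\sigma}$ form a single coset of $C_G(s)(\F_q)$ and Green functions depend only on the rational conjugacy class of the torus, all Coxeter tori of $\GL_m(\F_{q^d})$ being rationally conjugate.

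Two caveats. First, the identity $Q^{\GL_m(\F_{q^d})}_{\mathrm{Cox}}(u)=\prod_{i=1}^{t}(1-q^{di})$, with $t+1$ the number of Jordan blocks of $u$, is not a side computation: at $s=1$ it \emph{is} part (ii) of the theorem, so a complete proof must actually establish it (via the cohomology of the Coxeter Deligne--Lusztig variety, or Green's symmetric-function computation, as you indicate); your sketch defers precisely the heart of the unipotent case, so as written it is a reduction, not a proof. Second, signs: $\epsilon_G\epsilon_T=(-1)^{n}\cdot(-1)=(-1)^{n-1}$, and carrying this through your argument produces a global factor $(-1)^{n-1}$, not the $(-1)^{n}$ printed in the statement. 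At $s=u=1$ the printed formula would give $-\prod_{i=1}^{n-1}(q^i-1)$, the negative of the dimension, and for $n=2$ at a regular elliptic $s$ it disagrees with the notes' own formula $Ds(\chi)(x)=-(\chi(x)+\chi(x^{\sigma}))$; so the exponent in the statement is presumably a slip, but your assertion that your normalization ``matches (ii) at $s=u=1$'' tacitly uses the corrected sign. Make that explicit, and track $\epsilon_G$, $\epsilon_T$ and the sign convention in the Green function carefully rather than asserting agreement with the displayed formula.
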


To understand the decomposition of $\Ind_{P(\F_q)}^{G(\F_q)} \rho$ for  $\rho$ a cuspidal representation of $M(\F_q)$, and $G=\GL_n(\F_q)$, 
it suffices\footnote{\sf One observes that if we have distinct cuspidal representations 
 $\rho_i$, then the principal series representation $\rho_1 \times \cdots \times \rho_d$ is irreducible.} to understand the decomposition of $ \sigma \times \cdots \times \sigma$, a representation of $\GL_{dm}(\F_q)$ obtained by parabolic induction of the representation $\sigma \otimes \cdots \otimes \sigma$ of $\GL_m(\F_q)^d$ considered as a Levi subgroup of $\GL_{dm}(\F_q)$.
The problem gets reduced to understanding the case when $\sigma = 1, m = 1$.
At least there exists a bijective correspondence between irreducible complex representations of $\sigma \times \cdots \times \sigma$ and $1 \times \cdots \times 1$.
Finally $1 \times \cdots \times 1$ is nothing but $\Ind_B^G 1$.

\begin{thm}
$\End(\Ind_B^G 1, \Ind_B^G 1) \cong \C[W]$.
\end{thm}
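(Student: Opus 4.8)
The plan is to exhibit an explicit basis of the Hecke algebra $\End_{G(\F_q)}(\Ind_B^G \mathbf{1})$ indexed by the Weyl group $W = W(G)$, and then to identify the multiplication. First I would invoke Frobenius reciprocity: $\End_{G}(\Ind_B^G \mathbf{1}) \cong \Hom_{G}(\Ind_B^G \mathbf{1}, \Ind_B^G \mathbf{1}) \cong (\Ind_B^G \mathbf{1})^{B}$, the space of $B(\F_q)$-invariant vectors in $\Ind_B^G \mathbf{1}$. Realizing $\Ind_B^G \mathbf{1}$ as the space of complex-valued functions on $B(\F_q)\backslash G(\F_q)$, this is the space of functions on $B(\F_q)\backslash G(\F_q)/B(\F_q)$. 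The Bruhat decomposition $G(\F_q) = \coprod_{w \in W} B(\F_q)\, w\, B(\F_q)$ — which one should recall holds over $\F_q$, with the double cosets genuinely parametrized by $W$ — then shows that the characteristic functions $\{T_w : w \in W\}$ of the double cosets $B(\F_q) w B(\F_q)$ form a $\C$-basis. So as a vector space $\End_{G}(\Ind_B^G\mathbf 1)$ has dimension $|W|$, matching $\C[W]$.

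Next I would pin down the algebra structure, i.e. the convolution product $T_w * T_{w'}$. The key computation is the product rule with a simple reflection: for $s = s_\alpha$ a simple reflection and any $w \in W$,
\[
T_s * T_w = \begin{cases} T_{sw} & \text{if } \ell(sw) > \ell(w),\\ q\, T_{sw} + (q-1)\, T_w & \text{if } \ell(sw) < \ell(w),\end{cases}
\]
which follows from counting points of $\F_q$-varieties: the relevant double-coset intersections are governed by the cell structure $BsB \cdot BwB = BswB$ when $\ell(sw)>\ell(w)$ and $BsB\cdot BwB = BswB \cup BwB$ otherwise, and the multiplicities $q$, $q-1$ come from the number of $\F_q$-points in the fibers of the corresponding maps (the orbit of the one-dimensional unipotent root subgroup $U_\alpha(\F_q)$ has $q$ elements). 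From these relations one reads off that $T_s^2 = q\,T_1 + (q-1)T_s$, and that the $T_w$ satisfy the braid relations inherited from $W$ (so $T_w$ is well-defined independent of a reduced expression, and $T_w T_{w'} = T_{ww'}$ whenever $\ell(ww') = \ell(w)+\ell(w')$). This presents $\End_G(\Ind_B^G\mathbf 1)$ as the Iwahori–Hecke algebra $\mathcal H_q(W)$.

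Finally I would deduce the isomorphism with $\C[W]$. Since we are over $\C$ with $q$ a prime power (in particular $q$ is not a root of the relevant Poincaré polynomial), the Hecke algebra $\mathcal H_q(W)$ is semisimple and has the same dimension $|W|$ and, by Tits' deformation argument, the same decomposition into matrix blocks as $\mathcal H_1(W) = \C[W]$; hence $\mathcal H_q(W) \cong \C[W]$ as $\C$-algebras. Concretely, one can either cite Tits' theorem that a Hecke algebra over a field specializes to the group algebra when the parameter is generic, or — for the classical $W$ appearing here — build the isomorphism from the known irreducible representations. The main obstacle is the braid/independence step: verifying that $T_w$ depends only on $w$ and not on the chosen reduced word, equivalently that the structure constants from the simple-reflection rule are associative and consistent. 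This is exactly where the combinatorics of the Bruhat order and the geometry of the $\F_q$-points of Schubert cells do the real work; everything after that is formal. One subtlety worth flagging: the isomorphism $\mathcal H_q(W)\cong\C[W]$ is \emph{not} canonical (it permutes which representation of $W$ is attached to which unipotent-type constituent of $\Ind_B^G\mathbf 1$), which is why the decomposition of the principal series, while parametrized by $\widehat W$, carries nontrivial further information.
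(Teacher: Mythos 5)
Your argument is correct, and since the paper states this theorem without proof (these are expository notes), there is nothing in the text to compare it against; what you give is the standard Iwahori--Tits argument, and it works. Two small remarks. First, you do not need to appeal to non-vanishing of the Poincar\'e polynomial to get semisimplicity of the Hecke algebra: $\End_{G(\F_q)}(\Ind_B^G 1)$ is the endomorphism algebra of a finite-dimensional complex representation of a finite group, hence automatically a direct sum of matrix algebras over $\C$; the genuine content of the last step is only the Tits deformation (or Curtis--Iwahori--Kilmoyer) statement that the sizes of these matrix blocks agree with those of $\C[W]$, i.e.\ that the dimensions of the irreducible $\mathcal H_q(W)$-modules match those of $\widehat W$. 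Second, the ``braid/independence obstacle'' you flag is not really an obstacle in this geometric realization: the elements $T_w$ are defined intrinsically as characteristic functions of the Bruhat cells $B(\F_q)wB(\F_q)$, so they are well-defined from the start; what the multiplication rule with simple reflections buys you is the identity $T_w = T_{s_1}\cdots T_{s_r}$ for a reduced word, which is what exhibits the algebra as the Iwahori--Hecke algebra with its standard presentation. With those clarifications your proof is complete, and your closing caveat is well taken: the isomorphism $\mathcal H_q(W)\cong \C[W]$ is not canonical, which is consistent with the paper's corollary that the constituents of $\Ind_B^G 1$ are merely \emph{parametrized} by $\widehat W$.
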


\begin{cor}
Irreducible components of $\Ind_B^G 1$ are in one-to-one correspondence with $\widehat{W}$, the set of irreducible representations of $W$.
\end{cor}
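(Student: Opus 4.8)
The statement to prove is that the irreducible constituents of $\Ind_B^G 1$ for $G = \GL_n(\F_q)$ are in bijection with $\widehat W$, the irreducibles of the Weyl group $W = S_n$. The plan is to deduce this formally from the preceding theorem, which identifies the endomorphism algebra $\End_{G(\F_q)}(\Ind_B^G 1)$ with the group algebra $\C[W]$. The key abstract input is the following standard fact from the representation theory of finite-dimensional semisimple algebras: if $\rho$ is a representation of a finite group $G(\F_q)$ over $\C$, then the isotypic decomposition of $\rho$ is governed by its endomorphism algebra, and the number of distinct irreducible constituents of $\rho$ equals the number of irreducible modules of $A := \End_{G(\F_q)}(\rho)$ — equivalently, simple $\rho$-isotypic pieces correspond to simple $A$-modules. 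This is because, by double centralizer / Schur-type considerations over the algebraically closed field $\C$, the functor $\pi \mapsto \Hom_{G(\F_q)}(\pi, \rho)$ sends the distinct irreducible constituents of $\rho$ bijectively to the simple modules of $A$, with multiplicities matching up.

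First I would invoke semisimplicity of $\C[G(\F_q)]$ (Maschke), so that $\Ind_B^G 1$ decomposes as a direct sum of isotypic components $\bigoplus_i \pi_i^{\oplus m_i}$ with the $\pi_i$ pairwise non-isomorphic irreducibles and $m_i \ge 1$. By Schur's lemma, $\End_{G(\F_q)}(\Ind_B^G 1) \cong \prod_i \M_{m_i}(\C)$, a product of matrix algebras indexed by the distinct constituents $\pi_i$. Next I would use the previous theorem: this same endomorphism algebra is isomorphic to $\C[W]$, and since $W = S_n$ is a finite group, $\C[W]$ is itself semisimple with $\C[W] \cong \prod_{\lambda \in \widehat W} \M_{\dim \lambda}(\C)$ by the Artin–Wedderburn theorem. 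Comparing the two product decompositions of the same algebra, the set of matrix-algebra factors must match: the index set $\{\pi_i\}$ of distinct irreducible constituents of $\Ind_B^G 1$ is in bijection with $\widehat W$, and moreover under this bijection the multiplicity $m_i$ of $\pi_i$ in $\Ind_B^G 1$ equals the dimension $\dim\lambda$ of the corresponding $W$-representation $\lambda$. Restricting attention to the stated claim, this yields the desired one-to-one correspondence between the irreducible components of $\Ind_B^G 1$ and $\widehat W$.

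The only subtlety — and the point I would be most careful about — is the bookkeeping that makes ``factors of $\prod_i \M_{m_i}(\C)$ correspond to factors of $\prod_{\lambda} \M_{\dim\lambda}(\C)$'' into an honest bijection of index sets rather than just an abstract algebra isomorphism. The cleanest way to handle this is to note that the simple modules of a finite product $\prod_j \M_{n_j}(\C)$ are, up to isomorphism, exactly the $j$-th coordinate standard modules $\C^{n_j}$, one for each factor; two such algebras are isomorphic iff they have the same multiset of matrix sizes, hence in particular the same number of factors. On the $G(\F_q)$-side the factor indexed by $\pi_i$ is recovered intrinsically as $\Hom_{G(\F_q)}(\pi_i, \Ind_B^G 1)$, which is a simple $A$-module; on the $W$-side the factors are the simple $\C[W]$-modules, i.e.\ $\widehat W$. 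Transporting the simple-module sets across the isomorphism $A \cong \C[W]$ gives the bijection. I do not expect any serious obstacle here: the entire argument is a formal consequence of Artin–Wedderburn together with the already-established computation of the Hecke algebra, so the ``hard part'' — actually identifying $\End_{G(\F_q)}(\Ind_B^G 1)$ with $\C[W]$ — has been done in the preceding theorem and is assumed.
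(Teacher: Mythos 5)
Your argument is correct and is exactly the deduction the paper intends: the corollary is stated as an immediate consequence of the theorem $\End_{G}(\Ind_B^G 1)\cong\C[W]$, and your use of semisimplicity, Schur's lemma, and Artin--Wedderburn to match simple modules of the endomorphism algebra with distinct irreducible constituents is the standard (and essentially unique) way to make that deduction precise. No issues.
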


Unipotent conjugacy classes in $\GL_n(\F_q)$ are in one-to-one correspondence with partitions of $n$ which also parametrize conjugacy classes in $S_n$ which is the Weyl group of $\GL_n(\F_q)$.

\section{The  Deligne-Lusztig theory}

Let us fix some notation. 
For any integer $n$, we denote by $n_p$, the $p$-primary component of $n$, 
 and $n_{p'} = n/n_p$. 
Define a  sign $\epsilon_G$ associated to a reductive group $G$ which plays a large role in character theory by 
$\epsilon_G:= (-1)^{r(G)}$, where $r(G)$ is the dimension of the maximal $\F_q$-split torus in $G$, the $\F_q$-rank of $G$. 

The work of Deligne and Lusztig was motivated by the following conjecture due to Macdonald.

\begin{conj}[{\sf Macdonald}]
To every maximal torus $T/\F_q$ contained in $G/\F_q$ and a character $\theta:T(\F_q) \to \C^{\times}$, which is regular in the sense that ${}^w\theta \ne \theta$ for all $w \in W:=N(T)(\F_q)/T(\F_q)$, $w \not = 1$,  there exists an irreducible representation $\pi_{\theta}$ of $G(\F_q)$ of dimension $|G(\F_q)|_{p'}/|T(\F_q)|$ 
whose value on a regular semisimple element $s \in G(\F_q)$ is nonzero only if $s \in T(\F_q)$ up to conjugacy. The character $\chi_{\theta}$ of $\pi_{\theta}$ at $s \in T(\F_q)$ is $\epsilon_T \epsilon_G \sum_{w \in W} \theta(s^w)$.
Further, the character $\chi_{\theta}$ at unipotent elements is independent of $\theta$.
\end{conj}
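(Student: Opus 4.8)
The plan is to realize $\pi_\theta$ inside the Deligne--Lusztig virtual representation $R_G(T,\theta)$ and to deduce every assertion of the conjecture from three standard consequences of its $\ell$-adic construction: an Euler characteristic (dimension) formula, an inner product formula, and the Deligne--Lusztig character formula. Throughout write $G^F = G(\F_q)$, $T^F = T(\F_q)$, and $W = N(T)(\F_q)/T^F$. Choose a Borel subgroup $B \supseteq T$ defined over $\overline{\F}_q$ with unipotent radical $U$, and form the Deligne--Lusztig variety $Y_U = \{\, g \in G :\ g^{-1}F(g) \in U \,\}$, on which $G^F$ acts by left and $T^F$ by right translation, the two actions commuting; let $H^i_c(Y_U,\overline{\Q}_l)_\theta$ be the $\theta$-isotypic subspace for $T^F$. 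Then
$$R_G(T,\theta) \;=\; \sum_i (-1)^i\,\bigl[\,H^i_c(Y_U,\overline{\Q}_l)_\theta\,\bigr]$$
is a virtual $G^F$-representation, independent of the choice of $B$, and we will show that $\pi_\theta := \epsilon_G\epsilon_T\, R_G(T,\theta)$ has all the required properties.

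\textbf{Irreducibility and dimension.} The decisive identity is the inner product formula
$$\bigl\langle R_G(T,\theta),\, R_G(T,\theta)\bigr\rangle_{G^F} \;=\; \#\{\, w \in W :\ {}^w\theta = \theta \,\},$$
proved by expressing $\langle R_G(T,\theta), R_G(T,\theta)\rangle$ as an alternating trace on $H^\ast_c(Y_U \times Y_U,\overline{\Q}_l)$ via the Grothendieck--Lefschetz fixed point formula and then counting $F$-fixed points with the Lang--Steinberg theorem. As $\theta$ is regular the right-hand side equals $1$, so $R_G(T,\theta) = \pm\chi$ with $\chi$ an \emph{irreducible} character of $G^F$. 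Separately, computing the Euler characteristic of $Y_U$ gives $R_G(T,\theta)(1) = \epsilon_G\epsilon_T\,|G^F|/(|T^F|\cdot|G^F|_p)$; since $|T^F|$ is prime to $p$ this is $\epsilon_G\epsilon_T\,|G^F|_{p'}/|T^F| \ne 0$. Hence the sign is forced: $\pi_\theta = \epsilon_G\epsilon_T R_G(T,\theta)$ is a genuine irreducible representation of $G^F$ of dimension $|G^F|_{p'}/|T^F|$.

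\textbf{Character values.} The remaining statements are specializations of the Deligne--Lusztig character formula: for $g \in G^F$ with Jordan decomposition $g = su$,
$$R_G(T,\theta)(g) \;=\; \frac{1}{|C_G(s)^{\circ F}|}\sum_{\substack{x\in G^F\\ x^{-1}sx\in T}} Q^{C_G(s)^\circ}_{\,xTx^{-1}}(u)\;\theta(x^{-1}sx),$$
where $Q$ denotes a Green function. If $s$ is regular semisimple then $C_G(s)^\circ$ is the unique maximal torus $T_s$ through $s$ and $Q^{T_s}_{T_s}(1) = 1$; the condition $x^{-1}sx\in T$ forces $xTx^{-1} = T_s$, so the sum is empty unless $T$ is $G^F$-conjugate to $T_s$, equivalently unless $s$ lies in $T^F$ up to conjugacy --- whence $\chi_\theta(s) = 0$ otherwise. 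When $T = T_s$ (after conjugating), $x$ ranges over $N(T)(\F_q)$ and each $T^F$-coset $w \in W$ contributes $|T^F|\,\theta(s^w)$ to the sum, so $R_G(T,\theta)(s) = \sum_{w\in W}\theta(s^w)$ and therefore $\chi_\theta(s) = \epsilon_G\epsilon_T\sum_{w\in W}\theta(s^w)$. For $g = u$ unipotent we have $s = 1$, $C_G(1)^\circ = G$, and the condition $x^{-1}\cdot 1\cdot x\in T$ is vacuous, so
$$R_G(T,\theta)(u) \;=\; \frac{1}{|G^F|}\sum_{x\in G^F} Q^G_{\,xTx^{-1}}(u),$$
which contains no reference to $\theta$; the same holds for $\chi_\theta(u) = \epsilon_G\epsilon_T\,R_G(T,\theta)(u)$.

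\textbf{The main obstacle.} The substantive content is entirely in the two cohomological theorems quoted above, the inner product formula and the character formula; the latter itself rests on fibering $Y_U$ over the Deligne--Lusztig variety of $C_G(s)^\circ$, on the Deligne--Lusztig trace formula for the action of a commuting semisimple--unipotent pair on $\ell$-adic cohomology with compact supports, and on the definition and basic properties of the Green functions $Q^G_{T'}(u) := R_G(T',\mathbf 1)(u)$. I would import these from the original Deligne--Lusztig paper rather than reprove them; once they are granted, Macdonald's conjecture is a matter of bookkeeping, the only point needing care being to pin down the overall sign $\epsilon_G\epsilon_T$, which is determined by the (positive) virtual dimension $R_G(T,\theta)(1)$.
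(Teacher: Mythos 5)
Your outline is essentially the standard Deligne--Lusztig argument and it is correct: the regularity hypothesis plus the inner product formula gives $\langle R_G(T,\theta),R_G(T,\theta)\rangle=1$, the Euler characteristic computation gives the virtual degree $\epsilon_G\epsilon_T|G(\F_q)|_{p'}/|T(\F_q)|$ and so fixes the sign, and the character formula with Green functions yields both the vanishing off (conjugates of) $T(\F_q)$ at regular semisimple elements, the value $\epsilon_G\epsilon_T\sum_{w\in W}\theta(s^w)$ there, and the $\theta$-independence at unipotent elements. Note, however, that these notes do not prove this statement at all: it is recorded as Macdonald's conjecture, with the remark that the work of Deligne and Lusztig establishes it, and the reader is referred to their paper and to Digne--Michel. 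So the comparison is really between your assembled proof and the scattered ingredients the notes do contain: the inner product theorem appears as the orthogonality statement $\langle R^G_T(\theta),R^G_{T'}(\theta')\rangle=\frac{1}{|T(\F_q)|}\#\{g\in G(\F_q):{}^gT=T',{}^g\theta=\theta'\}$ (equivalent to the form you use), and the dimension formula is proved in Section 8 via the Lefschetz fixed point theorem and the duality operator $D_G$ paired against the Steinberg character --- a route you could substitute for your unexplained ``computing the Euler characteristic of $Y_U$.'' What the notes never touch is the one ingredient carrying most of the weight in your character-value computations, namely the Deligne--Lusztig character formula $R_T^G\theta(su)=\frac{1}{|C_G^{\circ}(s)^F|}\sum_{x}Q^{C_G^{\circ}(s)}_{xTx^{-1}}(u)\,\theta(x^{-1}sx)$ and the attendant theory of Green functions; you rightly flag that you import it from the original paper, and that importation is unavoidable at the level of these notes --- it is precisely where the substantive cohomological work lies, so your ``proof'' is an accurate reduction of Macdonald's conjecture to the main theorems of Deligne--Lusztig rather than an independent argument, which is a fair and correct thing to present here.
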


A part of the conjecture was that `most' of the representations of $G(\F_q)$ are irreducible $\pi_{\theta}$.

\vskip3mm
The work of Deligne-Lusztig proves Macdonald's conjecture and gives a geometric way of realizing them.

\begin{exam}{\sf
\begin{enumerate}
\item The dimension of a cuspidal representation of $\GL_n(\F_q)$ is $\prod_{i=1}^{n-1} (q^i - 1) = |\GL_n(\F_q)|_{p'}/|T(\F_q)|$ where $T(\F_q) = \F_{q^n}^{\times}$.
\item For $\GL_2(\F_q)$: $Ps(\alpha, \beta)(x, y) = (\alpha(x)\beta(y) + \alpha(y)\beta(x))$, where $\alpha, \beta$ are characters of $\F_q^\times$.
\item For $\GL_2(\F_q)$: $Ds(\chi)(x) = - (\chi(x) + \chi(x^{\sigma}))$, where $\chi$ is a 
regular character of $\F^\times_{q^2}$.
\end{enumerate}
}\end{exam}

Instead of defining $R_T^G(\theta)$ for any character $\theta$ of $T(\F_q)$, we define a more general representation $R_L^G(\rho)$ where $\rho$ is an 
irreducible representation of $L(\F_q)$, where $L$ is a subgroup of $G$ defined over $\F_q$, which is a Levi subgroup of a parabolic $P$ of $G$ defined over $\bar{\F}_q$, without 
requiring that $L$ is the Levi component of a parabolic defined over $\F_q$.

Begin with the algebraic variety $X_U=\{g \in G(\overline{\F}_q):g^{-1} F(g) \in U(\overline{\F}_q)\}$ where $P = LU$ is a parabolic in $G(\overline{\F}_q)$, and $F(g)$ denotes the image of 
$g\in G(\overline{\F}_q)$ under the Frobenius map.
The variety $X_U$ has left action of $G(\F_q)$, and right action of $L(\F_q)$.
The Deligne-Lusztig representation is on the $\overline{\Q}_l$-vector space 
$H_c^*(X)= \sum (-1)^i H^i_c(X_U, \overline{\Q}_l)$ admitting an action of $G(\F_q) \times L(\F_q)$. 
For an irreducible representation $\rho$ of $L(\F_q)$, the representation $R_L^G(\rho)$ is the $\rho$-isotypic component of this representation which is a (virtual) representation of $G(\F_q)$. As the 
notation suggests, $R_L^G(\rho)$ depends only on $L$ and not on the parabolic $P$ (defined over $\bar{\F}_q$) which is used to define $R_L^G(\rho)$.

We define a pairing $\langle -,- \rangle$ on the Grothendieck group of representations of a group by $\langle \pi, \pi' \rangle = \delta_{\pi, \pi'}$ for irreducible 
representations $\pi, \pi'$ and then extended  by linearity to the Grothendieck group.
One then has:

\begin{thm}
${\displaystyle \langle R^G_T(\theta), R^G_{T'}(\theta')\rangle = \frac{1}{ |T(\F_q)|}  \bigg[\big\{g \in G(\F_q): {}^gT = T', {}^g\theta = \theta'\big\} \bigg]}$
\end{thm}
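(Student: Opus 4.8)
The plan is to compute both sides of the inner product formula by realizing $R^G_T(\theta)$ and $R^G_{T'}(\theta')$ as virtual representations on the $\ell$-adic cohomology $H^*_c(X_U)$ and $H^*_c(X_{U'})$ of the associated Deligne–Lusztig varieties, and then to reduce the pairing $\langle R^G_T(\theta), R^G_{T'}(\theta')\rangle$ to the computation of the $(\theta,\theta')$-isotypic part of the $T(\F_q)\times T'(\F_q)$-action on the cohomology of the fibre product $X_U \times_{G(\F_q)} X_{U'}$. Concretely, since $R^G_T(\theta)$ is the $\theta$-isotypic component of $\sum_i (-1)^i H^i_c(X_U)$ as a $G(\F_q)\times T(\F_q)$-module, the standard adjunction gives
\begin{equation*}
\langle R^G_T(\theta), R^G_{T'}(\theta')\rangle = \langle \sum_{i,j}(-1)^{i+j} H^i_c(X_U)\otimes H^j_c(X_{U'}), \; \theta\boxtimes\theta' \rangle_{T(\F_q)\times T'(\F_q)},
\end{equation*}
where on the left the pairing is over $G(\F_q)$. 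By the Künneth formula this identifies the relevant multiplicity space with the alternating sum of cohomology groups $H^*_c(Z)$, where $Z = \{(x,x') \in X_U\times X_{U'}\}/G(\F_q)$, with its residual $T(\F_q)\times T'(\F_q)$ action.

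The next step is to give an explicit model for the quotient variety $Z$. The key geometric input is that the map $(x,x')\mapsto x^{-1}x'$ identifies $Z$ with a disjoint union, indexed by the double cosets $U^F \backslash \{g : \text{suitable position condition}\} / U'^F$ — morally by elements of $G$ intertwining the two parabolic data — of affine-space bundles over varieties of the form $\{y : y^{-1}F(y) \in \text{(unipotent part)}\}$ for smaller groups. The crucial point, which goes back to the proof in Deligne–Lusztig, is a fixed-point / Lefschetz-type vanishing: the Euler characteristic of each such piece vanishes unless the intertwining element $g$ actually conjugates $T$ to $T'$ and $\theta$ to $\theta'$, in which case the piece contributes exactly $1$ to the Euler characteristic (the affine-bundle factors contribute a power of $q$ that becomes $1$ after the $\theta\boxtimes\theta'$-projection forces the relevant torus character sum to collapse). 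Summing over the surviving double cosets and dividing by $|T(\F_q)|$ (the stabilizer, acting freely on the set of intertwiners once the conjugation conditions are imposed) yields exactly $\tfrac{1}{|T(\F_q)|}\big|\{g\in G(\F_q): {}^gT = T',\ {}^g\theta=\theta'\}\big|$.

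**The main obstacle** is the vanishing statement in the previous paragraph: showing that each stratum with a nontrivial or non-conjugating intertwiner contributes zero to the alternating sum of cohomology. This is where the hard geometry lives. The standard route uses the action of the finite abelian group $T(\F_q)$ (or a well-chosen subgroup, e.g. a cyclic $p'$-subgroup) on each stratum and invokes the fact that the Euler characteristic of a variety with a free action of a group of order prime to $p$ can be computed on the quotient, together with a theorem that $H^*_c$ of the relevant Deligne–Lusztig-type subvarieties has no contribution from certain characters unless a genuine conjugation occurs — this is essentially the disjointness argument and the computation of $H^*_c$ of the variety $\{g : g^{-1}F(g)\in U\cap {}^wU\}$-type objects. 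I would carry this out by first handling the case $T=T'$, $\theta=\theta'$ (giving the norm of a single $R^G_T(\theta)$), then deducing the general disjointness $\langle R^G_T(\theta), R^G_{T'}(\theta')\rangle = 0$ when no intertwiner conjugates the data, and finally assembling the counting formula. Throughout, one reduces to $L=T$ a torus at the outset, since for general Levi $L$ the analogous statement requires an inductive argument on the semisimple rank that I would avoid here; the excerpt only asks for the torus case.
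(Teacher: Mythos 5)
The paper itself states this theorem without proof (it is an expository set of notes deferring to Deligne--Lusztig and Digne--Michel), so the only meaningful comparison is with the original Deligne--Lusztig argument, and your outline does reproduce its skeleton correctly: pass to the cohomology of $X_U\times X_{U'}$ modulo the diagonal $G(\F_q)$-action, use $(x,x')\mapsto x^{-1}x'$ to get an explicit model, stratify by relative position, and compute $T(\F_q)\times T'(\F_q)$-equivariant Euler characteristics stratum by stratum via the fixed-point theorem. However, as a proof your proposal has a genuine gap exactly where you place ``the main obstacle'': the vanishing of the contribution of every stratum whose indexing element does not carry $(T,\theta)$ to $(T',\theta')$ is the entire content of the theorem, and you do not prove it --- you appeal to ``a theorem that $H^*_c$ of the relevant subvarieties has no contribution from certain characters unless a genuine conjugation occurs,'' which is essentially the statement being proved, and your fallback to ``the disjointness argument'' inverts the logical order, since in these notes (and in Deligne--Lusztig) the disjointness/geometric-conjugacy exclusion theorem is obtained from, or by the same computation as, this orthogonality formula, so it cannot be used as an input here. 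What is actually needed on each stratum is: the identification of the stratum as an affine-space bundle over a smaller variety carrying the two torus actions; the invariance of $\ell$-adic Euler characteristics (with their finite-order equivariance, via the Lefschetz fixed-point statement quoted in Section 8) under such fibrations; and then the elementary but essential character-sum over $T(\F_q)\times T'(\F_q)$, which kills the stratum unless conjugation by the indexing element matches $\theta$ with $\theta'$ and otherwise contributes exactly one per $F$-stable class. Your remark that ``the affine-bundle factors contribute a power of $q$ that becomes $1$ after the $\theta\boxtimes\theta'$-projection'' is not a justification and is not what happens: the powers of $q$ disappear because Euler characteristics, not Betti numbers, are being tracked through affine fibrations.

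Two smaller but real inaccuracies: the stratification is by double cosets of the algebraic groups $U$ and $U'$ over $\overline{\F}_q$ (relative position), not by $U^F\backslash\{\cdot\}/U'^F$ as you write, and the finiteness of the index set comes from the Bruhat-type decomposition, not from taking $F$-fixed points; and in your first displayed identity one factor must be dualized (equivalently, one must use $\tfrac{1}{|G(\F_q)|}\sum_g \chi(g)\chi'(g^{-1})$), otherwise the pairing written is not $\langle R^G_T(\theta),R^G_{T'}(\theta')\rangle_{G(\F_q)}$ and the roles of $\theta'$ and $\theta'^{-1}$ get confused in the final count. With the stratum-by-stratum vanishing actually carried out (this is the hard part of Deligne--Lusztig's Theorem 6.8), the rest of your plan, including the final count $\#\{g\in G(\F_q):{}^gT=T',{}^g\theta=\theta'\}/|T(\F_q)|$ via the free right $T(\F_q)$-action on the set of intertwiners, is fine.
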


Two pairs $(T, \theta)$ and $(T', \theta')$ are said to be {\it geometrically conjugate} if there exists $n \in \N$ such that $(T_n, \theta_n)$ and $(T'_n, \theta'_n)$ are conjugate. 
Here for  a maximal torus $T$ of $G$ defined over $\F_q$ and a character $\theta: T(\F_q) \to \C^{\times}$, the torus $T_n$ is the torus $T$ now considered in $G(\F_{q^n})$, and the character $\theta_n$ is the character of 
$T_n(\F_{q})$ given by  $T_n(\F_q) = T(\F_{q^n}) \stackrel{\Nm}{\longrightarrow} T(\F_q) \stackrel{\theta}{\longrightarrow} \C^{\times}$, where $\Nm:T(\F_{q^n}) \to T(\F_q)$ is the norm mapping.

\begin{thm}
The virtual representations $R^G_T(\theta)$ and $R^G_{T'}(\theta')$ are disjoint if and only if the pairs $(T, \theta)$ and $(T', \theta')$ are not geometrically conjugate. 
\end{thm}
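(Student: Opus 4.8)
I read ``$R^G_T(\theta)$ and $R^G_{T'}(\theta')$ are disjoint'' as the assertion that no irreducible representation of $G(\F_q)$ occurs with nonzero multiplicity in both. Note first that this is genuinely finer than orthogonality: a nonzero value of $\langle-,-\rangle$ does force a common constituent, but two virtual characters can be orthogonal and still share one. So the inner product theorem just proved settles only part of the question, and the substance is the implication ``not geometrically conjugate $\Rightarrow$ disjoint''; I would treat that first.

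The plan for that implication is, contrapositively, to start from an irreducible $\rho$ of $G(\F_q)$ that is a constituent of both $R^G_T(\theta)$ and $R^G_{T'}(\theta')$ and to produce $n\in\N$ together with $g\in G(\F_{q^n})$ satisfying ${}^gT_n=T'_n$ and ${}^g\theta_n=\theta'_n$. Following Deligne and Lusztig: (1) realise $\rho$ inside $\ell$-adic cohomology --- $\rho\otimes\theta$ occurs in $H^i_c(X_U)$ for some $i$, and $\rho\otimes\theta'$ in $H^j_c(X_{U'})$ for the analogous variety, $U$ and $U'$ being unipotent radicals of Borels with Levi components $T$ and $T'$; (2) pass to the diagonal action of $G(\F_q)$ on $X_U\times X_{U'}$ and keep track of the residual $T(\F_q)\times T'(\F_q)$-action on the $\rho$-isotypic part of its cohomology --- this is the same Lefschetz computation, via Lang's theorem, that underlies the inner product formula, only now refined to remember the torus characters; (3) conclude that the non-vanishing forced by the common occurrence of $\rho$ yields an element $g\in G(\overline{\F}_q)$ intertwining the data for $T$ and $T'$, which becomes $F^n$-rational once $n$ is large enough and which, composed with the norm maps $T(\F_{q^n})\to T(\F_q)$ and $T'(\F_{q^n})\to T'(\F_q)$, exhibits $(T_n,\theta_n)$ and $(T'_n,\theta'_n)$ as $G(\F_{q^n})$-conjugate --- a contradiction. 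I would present the bookkeeping through the dual group $G^*$: attach to $(T,\theta)$ a semisimple class $(s)$ in $G^*(\F_q)$, note that geometric conjugacy is equality of these classes, and reformulate step (3) as the statement that every irreducible constituent of $R^G_T(\theta)$ lies in the Lusztig series $\mathcal E(G,(s))$, distinct series being disjoint subsets of $\mathrm{Irr}(G(\F_q))$.

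For the converse, ``geometrically conjugate $\Rightarrow$ not disjoint'', I would first dispose of the case where $(T,\theta)$ and $(T',\theta')$ are $G(\F_q)$-conjugate: the conjugating set is then nonempty and stable under right multiplication by the stabiliser of $(T',\theta')$, which contains $T'(\F_q)$, so $\langle R^G_T(\theta),R^G_{T'}(\theta')\rangle\ge1$ by the inner product theorem and a common constituent exists. In general the two pairs become conjugate only over some $\F_{q^n}$, and one needs a constituent common to all $R^G_T(\theta)$ in a fixed geometric class $(s)$; the natural candidate is the ``semisimple character'' $\chi_s$, for which $\langle R^G_T(\theta),\chi_s\rangle=\pm1$ whenever $(T,\theta)$ belongs to the class of $(s)$. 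This argument runs smoothly when centralisers of semisimple elements of $G^*$ are connected --- in particular for $\GL_n$, the running example of these notes --- which is the setting in which the clean biconditional is to be understood.

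The step I expect to be the main obstacle is (3): extracting geometric conjugacy from the $\ell$-adic cohomology of the product of Deligne--Lusztig varieties, rather than merely from its Euler characteristic. That is an irreducibly geometric input --- properties of $H^*_c$, the Lefschetz trace formula, Lang's theorem --- with no character-theoretic shortcut; the reduction to it, the dual-group translation, and the whole converse direction are then formal. For (3) itself one defers, as the notes propose, to Deligne--Lusztig and to Digne--Michel.
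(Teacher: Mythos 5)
The notes state this theorem without proof --- it is one of the results deferred to the paper of Deligne--Lusztig and to the book of Digne--Michel --- so there is no argument in the text to compare yours against step by step. Your outline is the standard one from the original source: you correctly observe that disjointness is finer than orthogonality, so the inner-product theorem stated just before does not already settle the matter; the substantive direction is ``not geometrically conjugate $\Rightarrow$ no common constituent,'' and it is extracted from the cohomology of $X_U\times X_{U'}$ with its $G(\F_q)$- and $T(\F_q)\times T'(\F_q)$-actions, not merely from the Euler-characteristic computation behind the inner product formula. Since you defer precisely that geometric step to Deligne--Lusztig and Digne--Michel, what you have is a correct organization and reduction of the proof rather than a proof; that is exactly the stance these notes themselves take, so there is no gap to hold against you beyond the one you explicitly acknowledge.

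Your caveat on the converse deserves emphasis, because it is not mere prudence: as literally stated, ``geometrically conjugate $\Rightarrow$ not disjoint'' fails when the centre of $G$ is disconnected. For $G=\SL_2(\F_q)$ with $q$ odd, let $\theta$ be the quadratic character of the split torus $T$ and $\theta'$ the quadratic character of the anisotropic torus $T'$. Over $\F_{q^2}$ both tori split and both inflated characters become the unique quadratic character of $\F_{q^2}^{\times}$, so the pairs are geometrically conjugate; yet $R^G_T(\theta)$ is a sum of two irreducibles of dimension $(q+1)/2$ while $\pm R^G_{T'}(\theta')$ is a sum of two irreducibles of dimension $(q-1)/2$, so no constituent is shared. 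Thus the biconditional should indeed be read under a connected-centre (equivalently, connected dual centralizer) hypothesis, as for $\GL_n$, and your argument via the semisimple character $\chi_s$ with $\langle R^G_T(\theta),\chi_s\rangle=\pm1$ is the standard way to obtain the converse in that setting; your treatment of the rationally conjugate case via the inner product formula is also correct.
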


The representation $R_L^G(\rho)$ is nothing but the parabolic induction if $L$ is a Levi subgroup of a parabolic $P$ which is defined over $\F_q$.
In that case, $X_U$ can be identified to $G(\F_q) \times_{U(\F_q)} U$.
The group $U$ being contractible, this space is for topological purposes essentially $G(\F_q)/U(\F_q)$ which is the home for the principal series representations.

The representations $R_L^G(\rho)$ have following nice properties:

1) Transitivity: If $L$ is a Levi subgroup in $G$ and $M$ is a Levi subgroup of $L$ then $R^G_L(R^L_M(\rho)) = R^G_M(\rho)$.

2) Behaviour under morphisms: If $\pi:G_1 \to G_2$ is an isomorphism up to centers, i.e., associated mapping from $G_1/Z_1 \to G_2/Z_2$ is an isomorphism of algebraic groups, then under such a morphism, tori in $G_1$ and $G_2$ correspond in a natural way (as the inverse image under $\pi$ of a maximal torus in $G_2$ is a maximal torus in $G_1$, and every maximal torus in $G_1$ arises in this way). 
We then have the following:

\begin{quote}
For a torus $T_2 \subseteq G_2$, let $\pi^{-1}(T_2) = T_1 \subseteq G_1$.
Thus we have a homomorphism of groups $T_1(\F_q) \to T_2(\F_q)$.
For $\theta_2:T_2(\F_q) \to \C^{\times}$,  composing with the homomorphism of groups $T_1(\F_q) \to T_2(\F_q)$, we get a character of $T_1(\F_q)$, which we denote as $\theta_2|_{T_1}$.
Then one has $R^{G_2}_{T_2}(\theta_2)|_{G_1} = R^{G_1}_{T_1}(\theta_2|_{T_1})$.
\end{quote}

We  have following corollaries of the two properties listed above: 

\begin{cor} 
\begin{enumerate}

\item If $T \subseteq L$ with $L$  a Levi subgroup of a parabolic subgroup of $G$ defined over $\F_q$, $R(T, \theta) : = R^G_T(\theta)$ has no cuspidal components, and therefore, the only tori $T$ for which $R(T,\theta)$ may 
have cuspidal representations are anisotropic mod center.

\item D-L representations of $\SL_n(\F_q)$ are nothing but restrictions of D-L representations of $\GL_n(\F_q)$.
\end{enumerate}
\end{cor}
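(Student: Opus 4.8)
The plan is to derive both statements formally from the transitivity and the ``behaviour under morphisms'' properties listed above, together with the fact that $R^G_L(-)$ is ordinary parabolic induction $\Ind_{P}^{G}$ whenever $L$ is the Levi factor of a parabolic $P=LU$ of $G$ defined over $\F_q$.

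\emph{Part (1).} Assume $T\subseteq L$ with $L$ the Levi factor of a \emph{proper} parabolic $P=LU$ of $G$ defined over $\F_q$; this is the relevant case, since for $P=G$ the representation $R^G_T(\theta)$ need not be built from inductions out of proper parabolics. Write $R^{L}_{T}(\theta)=\sum_i n_i\,\rho_i$ in the Grothendieck group of $L(\F_q)$, the $\rho_i$ irreducible and $n_i\in\Z$. Transitivity gives $R^{G}_{T}(\theta)=R^{G}_{L}\!\big(R^{L}_{T}(\theta)\big)=\sum_i n_i\,R^{G}_{L}(\rho_i)=\sum_i n_i\,\Ind_{P(\F_q)}^{G(\F_q)}\rho_i$. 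Hence any irreducible $\pi$ occurring (with either sign) in $R^{G}_{T}(\theta)$ occurs in $\Ind_{P}^{G}\rho_i$ for some $i$, and since $P$ is a proper parabolic and $\rho_i$ irreducible, $\pi$ is not cuspidal by the very definition of cuspidality. For the last clause, suppose $T$ is not anisotropic modulo the centre; then $T$ contains an $\F_q$-split subtorus $S$ with $S\not\subseteq Z(G)$, and $L:=Z_{G}(S)$ is the Levi factor of a proper parabolic of $G$ defined over $\F_q$ with $T\subseteq L$, so $R(T,\theta)$ has no cuspidal constituent. Contrapositively, a maximal torus carrying a $\theta$ with $R(T,\theta)$ having a cuspidal constituent must be anisotropic mod centre.

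\emph{Part (2).} Apply the ``behaviour under morphisms'' property to the inclusion $\iota:\SL_n\hookrightarrow\GL_n$, which is an isomorphism up to centres since $\SL_n/\mu_n=\operatorname{PGL}_n=\GL_n/\Gm$. By that property the maximal tori of $\SL_n$ over $\F_q$ are exactly the $T_1=\iota^{-1}(T_2)=T_2\cap\SL_n$ for $T_2$ a maximal torus of $\GL_n$ over $\F_q$ (concretely $T_2=Z_{\GL_n}(T_1)$), and $R^{\GL_n}_{T_2}(\theta_2)\big|_{\SL_n(\F_q)}=R^{\SL_n}_{T_1}\!\big(\theta_2|_{T_1}\big)$ for every character $\theta_2$ of $T_2(\F_q)$, where $\theta_2|_{T_1}$ is honest restriction along $T_1(\F_q)\subseteq T_2(\F_q)$ because $\iota$ is an inclusion. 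Finally $T_1(\F_q)$ is a subgroup of the finite abelian group $T_2(\F_q)$, so, $\C^{\times}$ being divisible, every character $\theta_1$ of $T_1(\F_q)$ extends to some $\theta_2$ of $T_2(\F_q)$; therefore every $R^{\SL_n}_{T_1}(\theta_1)$ is the restriction to $\SL_n(\F_q)$ of the $\GL_n(\F_q)$-representation $R^{\GL_n}_{T_2}(\theta_2)$, as claimed.

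No step here is deep once the two quoted properties are granted. In (1) the point needing care is the structure-theoretic input that the centralizer of an $\F_q$-split subtorus is the Levi factor of a parabolic defined over $\F_q$ --- this is exactly the characterization, recalled in the introduction, of the tori that do \emph{not} lie in a proper $\F_q$-Levi as those anisotropic mod centre --- together with the (easy) observation that non-cuspidality is already detected inside a \emph{virtual} combination with signs. In (2) the only thing to check is that the character map $\theta_2\mapsto\theta_2|_{T_1}$ appearing in the functoriality statement is ordinary restriction of characters, which is immediate since $\iota$ is literally the inclusion $\SL_n\hookrightarrow\GL_n$ and $T_1=T_2\cap\SL_n$.
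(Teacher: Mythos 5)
Your argument is correct and follows essentially the same route the paper intends: part (1) via transitivity plus the identification of $R^G_L$ with parabolic induction for an $\F_q$-rational Levi (the same mechanism used in the paper's Proposition \ref{propn:aniso}), and part (2) via the ``behaviour under morphisms'' property applied to $\SL_n\hookrightarrow\GL_n$. You merely make explicit some details the paper leaves implicit (properness of $P$, detecting non-cuspidality inside a virtual sum, and extending characters from $T_1(\F_q)$ to $T_2(\F_q)$), all of which are handled correctly.
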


\begin{Prop}
Suppose $\pi = \pi_{\chi}$ is an irreducible cuspidal representation of $\GL_n(\F_q)$ associated to $\chi:\F_{q^n}^{\times} \to \C^{\times}$.
Write the principal series representation $Ps(\pi, \pi)$ of $\GL_{2n}(\F_q)$ as $St_2(\pi) + Id_2(\pi)$ with 
$\dim St_2(\pi) \geq  \dim Id_2(\pi)$. Then $\dim St_2(\pi) = q^n \dim Id_2(\pi)$.
\end{Prop}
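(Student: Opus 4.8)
The plan is to pin down $\dim St_2(\pi)+\dim Id_2(\pi)$ and the ratio $\dim St_2(\pi)/\dim Id_2(\pi)$ separately. Write $G=\GL_{2n}(\F_q)$ and let $P=LN$ be the standard maximal parabolic with Levi $L=\GL_n(\F_q)\times\GL_n(\F_q)$ and unipotent radical $N$, so that $Ps(\pi,\pi)=\Ind_P^G(\pi\boxtimes\pi)$. Since $\pi$ is cuspidal, the constituents of $\pi\times\pi$ correspond, by the bijection recalled above, to those of $\Ind_B^{\GL_2(\F_q)}1$; the latter is $1\oplus\mathrm{St}$, two constituents each of multiplicity one, so $Ps(\pi,\pi)$ has exactly two distinct irreducible constituents, and by the convention of the statement $St_2(\pi)$ is the larger of the two.

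First I would compute the sum: $\dim St_2(\pi)+\dim Id_2(\pi)=\dim Ps(\pi,\pi)=[G:P]\cdot(\dim\pi)^2$. Using $|\GL_m(\F_q)|=q^{m(m-1)/2}\prod_{i=1}^m(q^i-1)$ together with $|N|=q^{n^2}$ (or simply by counting $n$-dimensional subspaces of $\F_q^{2n}$), the index is the Gaussian binomial
\[
[G:P]=\frac{\prod_{i=n+1}^{2n}(q^i-1)}{\prod_{i=1}^{n}(q^i-1)}=\binom{2n}{n}_q ,
\]
and since $q^{2n}-1=(q^n-1)(q^n+1)$ this factors as $\binom{2n}{n}_q=(q^n+1)\binom{2n-1}{n-1}_q$. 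Hence the assertion $\dim St_2(\pi)=q^n\dim Id_2(\pi)$ is equivalent to the clean formulas $\dim Id_2(\pi)=\binom{2n-1}{n-1}_q(\dim\pi)^2$ and $\dim St_2(\pi)=q^n\binom{2n-1}{n-1}_q(\dim\pi)^2$; as a sanity check, for $n=1$ this recovers $\dim Id_2=1$, $\dim St_2=q$, i.e. the decomposition $\Ind_B^{\GL_2}1=1\oplus\mathrm{St}$.

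The heart of the matter is the ratio, and this is the step I'd single out as the main obstacle. The key input is that $\End_G\big(Ps(\pi,\pi)\big)$ is not merely abstractly $\C[S_2]$, but the one-parameter Iwahori--Hecke algebra of $S_2$ with quadratic relation $(T-q^n)(T+1)=0$: the parameter attached to the reflection interchanging the two $\GL_n$-blocks equals $q^n$. This is the $\GL$-case of Howlett--Lehrer's determination of the endomorphism algebra of a Harish-Chandra induced cuspidal; the self-contained way to see it is to write down the standard rank-one intertwining operator $T_w$ on $Ps(\pi,\pi)$ (coming from the block-transposition double coset) and compute its eigenvalues together with $\operatorname{tr}(T_w)$, a computation in which cuspidality of $\pi$ collapses the double-coset/Mackey sum to a Gauss-sum-free count of flags. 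Granting this, the two one-dimensional characters of the algebra — the index character $T\mapsto q^n$ and the sign character $T\mapsto -1$ — have generic degrees $1$ and $q^n$ respectively, and the dimensions of the corresponding constituents of $Ps(\pi,\pi)$ are proportional to these generic degrees, with proportionality constant $\dim Ps(\pi,\pi)/(1+q^n)$; so the constituents have dimensions $\binom{2n-1}{n-1}_q(\dim\pi)^2$ and $q^n\binom{2n-1}{n-1}_q(\dim\pi)^2$, and as $q^n>1$ the larger one is $St_2(\pi)$, whence $\dim St_2(\pi)=q^n\dim Id_2(\pi)$. An alternative route, staying inside Deligne--Lusztig theory: by transitivity of $R$-induction $\pm\,Ps(\pi,\pi)=R^G_S(\chi\boxtimes\chi)$ for the maximal torus $S\cong\F_{q^n}^\times\times\F_{q^n}^\times$ of $G$, and since $\chi$ is regular the stabilizer of $\chi\boxtimes\chi$ in $N_G(S)(\F_q)/S(\F_q)$ is exactly $S_2$; the decomposition of $R^G_S(\theta)$ for such $\theta$ is governed by the same two generic degrees, so one is led back to the same computation.
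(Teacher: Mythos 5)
Your strategy is genuinely different from the paper's, and its arithmetic is consistent with the truth: the sum $\dim St_2(\pi)+\dim Id_2(\pi)=\binom{2n}{n}_q(\dim\pi)^2$ is correct, and the claimed constituent dimensions $\binom{2n-1}{n-1}_q(\dim\pi)^2$ and $q^n\binom{2n-1}{n-1}_q(\dim\pi)^2$ agree with what the paper obtains. But the step you yourself single out as ``the main obstacle'' is exactly where the content of the Proposition lives, and you do not supply it: that $\End_G(Ps(\pi,\pi))$ is the Iwahori--Hecke algebra with quadratic relation $(T-q^n)(T+1)=0$, \emph{and} that the two constituents have dimensions proportional to the generic degrees $1$ and $q^n$, are quoted (Howlett--Lehrer, with the comparison theorem on generic degrees) rather than proved; the intertwining-operator computation is only gestured at. Since the Proposition is essentially equivalent to the statement that the parameter is $q^n$, as written your argument is complete only modulo these external theorems, which are much heavier machinery than anything the notes develop.

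The paper's route stays entirely inside the Deligne--Lusztig formalism already set up: the key trick is to bring in the \emph{anisotropic} torus $\F_{q^{2n}}^{\times}$ with the character $\chi\circ\Nm$. The pairs $(\F_{q^n}^{\times}\times\F_{q^n}^{\times},\,\chi\times\chi)$ and $(\F_{q^{2n}}^{\times},\,\chi\circ\Nm)$ are geometrically conjugate but not $G(\F_q)$-conjugate, so by the inner product formula each $R$ is a sum of two distinct irreducibles, the two $R$'s are orthogonal, yet by the geometric conjugacy theorem they share constituents; hence $R^{G}_{\F_{q^{2n}}^{\times}}(\chi\circ\Nm)=\pm\bigl(St_2(\pi)-Id_2(\pi)\bigr)$. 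The single dimension formula $\dim R(T,\theta)=\pm|G|_{p'}/|T(\F_q)|$ then gives both the sum ($|T|=(q^n-1)^2$) and the difference ($|T|=q^{2n}-1$) of the two dimensions, and the ratio $q^n$ drops out by elementary algebra. Note that your ``alternative route'' via $R^G_S(\chi\boxtimes\chi)$ for $S\cong\F_{q^n}^{\times}\times\F_{q^n}^{\times}$ only reproduces the sum (it is $\pm Ps(\pi,\pi)$ itself) and then falls back on the same unproved generic-degree input; the missing idea there is precisely the second, anisotropic torus. If you want a self-contained proof at the level of these notes, either carry out the trace computation for the rank-one intertwining operator you describe, or adopt the two-torus argument.
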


\begin{proof}{\sf
We will prove below that $St_2(\pi) - Id_2(\pi)$ as a virtual character is the same as $R^{\GL_{2n}(\F_q)}_{\F_{q^{2n}}^{\times}}(\chi\circ \Nm)$, where $\chi \circ \Nm$ is the character of 
${\F^\times_{q^{2n}}} $ given by:
$ \F_{q^{2n}} \stackrel{\Nm}{\longrightarrow} \F_{q^n} \stackrel{\chi}{\longrightarrow} \C^{\times}$. In particular,
$$\dim [St_2(\pi)-Id_2(\pi)] = (q^{2n-1}-1)\cdots (q-1) .$$

Since,
$$\dim Ps(\pi, \pi) = \frac{|\GL_{2n}(\F_q)|_{p'}}{|T(\F_q)|} = \frac{(q^{2n}-1)\cdots (q-1)}{(q^n -1)^2},$$
we find that,

$$\dim St_2(\pi)= (q-1)\cdots (q^{2n-1}-1)\left[\frac{(q^{2n}-1)}{(q^n-1)} + 1\right] = \frac{2q^n}{(q^n-1)}$$
and 
$$\dim Id_2(\pi)= (q-1)\cdots (q^{2n-1}-1)\left[\frac{(q^{2n}-1)}{(q^n-1)} - 1\right] = \frac{2}{(q^n-1)} .$$
In particular,  $\dim St_2(\pi)/\dim Id_2(\pi) = q^n$.
}\end{proof}

We now prove the observation that was used here: $St_2(\pi) - Id_2(\pi)$ as a virtual representation 
is the same as $R^{\GL_{2n}(\F_q)}_{\F_{q^{2n}}^{\times}}(\chi\circ \Nm)$.

\begin{proof}{\sf
Recall that $R^G_{T_1}(\theta_1)$ and $R^G_{T_2}(\theta_2)$ are orthogonal unless $T_1 = T_2$, $\theta_1 = \theta_2$, up to conjugacy by $G(\F_q)$, and disjoint if and only if geometrically distinct. 
It follows that the representations $R^{\GL_{2n}(\F_q)}_{\F_{q^n}^{\times} \times \F_{q^n}^{\times}} (\chi \times \chi)$ and $R^{\GL_{2n}(\F_q)}_{\F_{q^{2n}}^{\times}}(\chi \circ \Nm)$ 
are orthogonal but have some components in common 
because $(T_1,\theta_1)$ and $(T_2,\theta_2)$ are geometrically conjugate as we check below.
By the orthogonality relations for the Deligne-Lusztig representations, 
both are sums of two distinct representations. 
The first one of them is $St_2(\pi) + Id_2(\pi)$ so the only choice for the latter one is $St_2(\pi) - Id_2(\pi)$.

Now we check that $(T_1,\theta_1)$ and $(T_2,\theta_2)$ are geometrically conjugate.
Both the pairs, $(\F^\times_{q^n} \times \F^\times_{q^n}, \chi \times \chi)$ and 
$(\F_{q^{2n}}^{\times}, \chi \circ \Nm)$ give,   after the base change to $\F_{q^{2n}}$, the same 
character 
$(\chi, \chi^{\sigma}, \dots, \chi^{\sigma^{n-1}}, \chi, \chi^{\sigma}, \dots, \chi^{\sigma^{n-1}})$ of 
$[(\F_{q^n} \oplus \F_{q^n}) \otimes_{\F_q} \F_{q^{2n}}]^\times 
= [\oplus_{i = 1}^n \F_{q^{2n}} + \oplus_{i = 1}^n \F_{q^{2n}}]^\times$, and 
of $[\F_{q^{2n}} \otimes_{\F_q} \F_{q^{2n}} = \oplus_{i=1}^{2n} \F_{q^{2n}}]^\times $.
Therefore,
$(T_1, \theta_1), (T_2, \theta_2)$ are geometrically conjugate.
}\end{proof}

\section{Unipotent cuspidal representations}

{\em Unipotent representations} are constituents of $R_T^G 1$ for  maximal tori $T$ in $G/\F_q$.
These are the building blocks of all the representations of $G(\F_q)$.
Irreducible constituents of $\Ind_B^{G(\F_q)} 1$ are examples of unipotent representations.
These are parametrized by representations of relative  Weyl group $W$ of $G$ (which we recall 
is defined to be $N(T)(\F_q)/T(\F_q)$ for $T$ a maximal torus of $G$ which is maximally 
split). 
More precisely, $\Ind_B^{G(\F_q)} 1 = \sum_{\alpha \in\widehat{W}} d(\alpha) \pi_{\alpha}$, where $d(\alpha)$ is the dimension of the irreducible representation $\alpha$ of $W$.

Cuspidal unipotent representations are unipotent representations which are also cuspidal.

\begin{thm}
For $G = \GL_n(\F_q)$ there are no cuspidal unipotent representations except for $n =1$.

For $G = \U_n(\F_q)$, there is a cuspidal unipotent representation if and only if $n$ is a triangular number, i.e., $n = 1+ 2+ \cdots + m$ for some $m$.
\end{thm}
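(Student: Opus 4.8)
The plan is to treat $\GL_n$ and $\U_n$ separately, in each case reducing cuspidality of a unipotent character to the combinatorics of Harish--Chandra series. For $\GL_1(\F_q)$, which has no proper parabolic, the unipotent trivial representation is vacuously cuspidal. For $n\ge 2$ I would combine Green's classification with the disjointness theorem for Deligne--Lusztig virtual characters. By Theorem~\ref{thm:Green}, an irreducible cuspidal representation of $\GL_n(\F_q)$ is $\pi_\chi$ for a regular character $\chi\colon\F_{q^n}^\times\to\C^\times$, and (Macdonald's conjecture, now a theorem) $\pi_\chi$ is a constituent of $R^{\GL_n}_{\F_{q^n}^\times}(\chi)$. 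If $\pi_\chi$ were also unipotent it would be a constituent of $R^{\GL_n}_T(1)$ for some maximal torus $T$, so by the disjointness theorem the pairs $(T,1)$ and $(\F_{q^n}^\times,\chi)$ would be geometrically conjugate. But after extension of scalars to $\F_{q^n}$ the first pair becomes a split torus carrying the trivial character, while $\chi$ becomes the character $(\chi,\chi^\sigma,\dots,\chi^{\sigma^{n-1}})$ of a split torus (the same type of base change as in the computation of $St_2(\pi)-Id_2(\pi)$ recalled above); these agree only when $\chi=1$, contradicting the regularity of $\chi$ once $n\ge 2$. Hence $\GL_n(\F_q)$ has no cuspidal unipotent representation for $n\ge 2$. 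Conceptually: every unipotent representation of $\GL_n$ already occurs in $\Ind_B^G 1$, and $B$ is a proper parabolic when $n\ge 2$.

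\textbf{The case $G=\U_n(\F_q)$: Levis and Lusztig's classification.} Up to conjugacy, the $\F_q$-rational Levi subgroups of $\U_n$ are the groups $L=\prod_i\GL_{a_i}(\F_{q^2})\times\U_{n_0}(\F_q)$ with $\sum_i2a_i+n_0=n$. A cuspidal \emph{unipotent} representation of such an $L$ exists only if each factor $\GL_{a_i}(\F_{q^2})$ carries one, which by the first part of the theorem (applied over $\F_{q^2}$) forces every $a_i=1$; thus the cuspidal unipotent pairs of $\U_n$ are exactly the $(\GL_1(\F_{q^2})^k\times\U_{n_0},\;\mathbf 1\boxtimes\sigma_0)$ with $\sigma_0$ a cuspidal unipotent representation of $\U_{n_0}$ and $n=n_0+2k$. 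Here I would invoke Lusztig's classification: the unipotent characters of $\U_n(\F_q)$ are parametrized by partitions $\lambda$ of $n$ (as for $\GL_n$, compatibly with Ennola duality), and the Harish--Chandra series of $\pi_\lambda$ is determined by the $2$-core and $2$-quotient of $\lambda$ --- the $2$-core of $\lambda$ is the staircase $\delta_{m_0}=(m_0,m_0-1,\dots,1)$ of size $n_0$ carried by $\sigma_0$, and the $2$-quotient (of size $k$) indexes $\pi_\lambda$ among the constituents of the induced module. Concretely: Harish--Chandra (equivalently, for an $\F_q$-rational Levi, parabolic) induction of a unipotent $\pi_\mu\boxtimes\mathbf 1$ from $\U_{n-2}\times\GL_1(\F_{q^2})$ is, up to signs, the sum of those $\pi_\lambda$ with $\lambda$ obtained from $\mu$ by adjoining a domino (a $2$-hook).

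\textbf{Conclusion for $\U_n$ and the main obstacle.} Granting the above, the rest is combinatorics. If $\lambda$ has a removable domino, $\lambda=\mu+(\text{domino})$, then $\pi_\lambda$ occurs with nonzero multiplicity in parabolic induction from the proper Levi $\U_{n-2}\times\GL_1(\F_{q^2})$, hence has a nonzero Jacquet module and is not cuspidal. Conversely, if $\lambda$ has no removable domino --- i.e.\ $\lambda$ equals its own $2$-core --- then $\lambda=\delta_m$ is a staircase and $n=|\delta_m|=1+2+\cdots+m$; and $\pi_{\delta_m}$ is then cuspidal, for if it were a constituent of $\Ind_L^G\sigma$ with $L$ proper then, Harish--Chandra induction preserving Lusztig series, one may pass to the (unipotent) cuspidal support, which has the form $(\GL_1(\F_{q^2})^k\times\U_{n_0},\mathbf 1\boxtimes\sigma_0)$ with $k\ge1$ (a proper Levi always contains a $\GL$-factor), forcing $\delta_m$ to have nonempty $2$-quotient --- a contradiction. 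Since the staircase partitions are precisely the $2$-cores and their sizes are precisely the triangular numbers, $\U_n(\F_q)$ has a cuspidal unipotent representation if and only if $n=1+2+\cdots+m$ for some $m$ (and then exactly one, a $2$-core being determined by its size). The main obstacle is the structural input used above: Lusztig's identification of the endomorphism algebra of the Harish--Chandra induced cuspidal module $\Ind(\mathbf 1\boxtimes\sigma_0)$ with an Iwahori--Hecke algebra of a Weyl group of type $B_k$ (with appropriate, in general unequal, parameters). This is what produces the parametrization by $2$-quotients and the ``add-a-domino'' rule, and it also subsumes the existence assertion --- that for triangular $n$ the predicted character $\pi_{\delta_m}$ really is an irreducible cuspidal unipotent representation.
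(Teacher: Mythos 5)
The notes do not actually prove this theorem: it is stated as a fact (it is a theorem of Lusztig), and the only supporting material is the explicit construction of the cuspidal unipotent representation of $\U_3(\F_q)$ inside $R_T^{\U_3}1$ for the anisotropic torus. So there is no proof in the paper to compare against, and your proposal has to be judged on its own. The $\GL_n$ half is sound and essentially complete with tools the notes already supply: Green's classification of cuspidal representations as $\pi_\chi$ with $\chi$ regular (Theorem \ref{thm:Green}), the realization of $\pi_\chi$ inside $R^{\GL_n}_{\F_{q^n}^\times}(\chi)$, and the disjointness theorem for pairs that are not geometrically conjugate do rule out a cuspidal unipotent representation for $n\ge 2$ (equivalently: the dual semisimple class attached to $(\F_{q^n}^\times,\chi)$ is nontrivial, so $\pi_\chi\notin{\mathcal E}(G,1)$).

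For $\U_n$ you follow the standard route (Lusztig, Fong--Srinivasan): cuspidal supports of unipotent representations are pairs $(\GL_1(\F_{q^2})^k\times\U_{n_0},\,\mathbf 1\boxtimes\sigma_0)$, and cuspidality is read off from the parametrization of unipotent characters by partitions of $n$, with Harish--Chandra series governed by $2$-cores and positions within a series by $2$-quotients via the type-$B_k$ Hecke algebra with unequal parameters. Be aware, though, that this is precisely where the whole content of the statement lives: the parametrization of unipotent characters of $\U_n$ by partitions, the comparison theorem for the endomorphism algebra, and the resulting criterion ``$\pi_\lambda$ cuspidal iff $\lambda$ equals its own $2$-core'' are, taken together, essentially equivalent to the assertion being proved, and you invoke them rather than prove them (as you candidly acknowledge). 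The remaining combinatorics you supply is correct: $2$-cores are exactly the staircases, whose sizes are the triangular numbers; every proper $\F_q$-rational Levi of $\U_n$ contains a $\GL$-factor; and the first half of the theorem applied over $\F_{q^2}$ forces all $\GL$-factors in a cuspidal support to be $\GL_1$. (One small point: induction from the rational Levi $\U_{n-2}\times\GL_1(\F_{q^2})$ is honest parabolic induction, so no signs are needed in the add-a-domino rule; signs only enter for Deligne--Lusztig induction from Levis not defined by rational parabolics.) So your argument is a correct reduction to Lusztig's classification rather than a self-contained proof; if one wants something in the spirit of these notes, the existence half for triangular $n$ can at least be attacked directly, as in the $\U_3$ example, by inner-product computations with $R_T(1)$ for anisotropic $T$ together with Proposition \ref{propn:aniso}, but that method alone does not give the non-existence statement for non-triangular $n$.
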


For classical groups, these representations exist once in a while, whereas for exceptional groups there are plenty of them.

\begin{exam}[{\sf Construction of cuspidal unipotent representation for $\U_3(\F_q)$}]{\sf
If $\U_3$ is defined by $z_1\bar{z}_1 + z_2 \bar{z}_2 + z_3\bar{z}_3$, let $T = K^1 \times K^1 \times K^1$, acting 
on each $z_i$ by $t_iz_i$.
Then $R_T^{\U_3} 1 
= -1 - 2 [(q^2 -q)] + St_3$.
The middle term,  of dimension $q^2-q$, is (twice) a cuspidal unipotent representation as we check now. 

To analyze  $R_T^{\U_3} 1 $, note that by the inner product formula, we have 
$\langle R_T^{\U_3} 1, R_T^{\U_3} 1\rangle = |W| = 6$.

Let $T_0= K \times K^1$ be the maximal torus contained in a Borel subgroup of $\U_3(\F_q)$.
Then one  has $\langle R^{\U_3}_T 1, R^{\U_3}_{T_0} 1\rangle = 0$, where $R^{\U_3}_{T_0} = 1 + St_3$.
By generalities, $R^{\U_3}_T 1$ always contains $1$ and $St$. 
Thus, by these relations, the only options left for $R_T^{\U_3} 1$ is $\pm(1 - St_3 + 2\pi)$.

We now prove  that $\pi$ is cuspidal. 
By Proposition \ref{propn:aniso} below, the Jacquet module of $R_T^{\U_3} 1$ (with respect to a Borel subgroup) is zero, but the Jacquet modules of $1$ and $St_3$ are the same so the Jacquet module of $\pi$ must be zero.

We finally fix the sign in $R_T^{\U_3} 1 = \pm(1 - St_3 + 2\pi)$.
For this, one computes the dimension 
$$\dim R^{\U_3}_T = \frac{|\U_3|_{p'}}{(q+1)^3} = (q+1)(q^2-1)(q^3+1)/(q+1)^3 = q^3-2(q^2-q)-1,$$
so the sign must be negative, i.e., $R_T^{\U_3} 1 = -(1 - St_3 + 2\pi)$.
}\end{exam}

\begin{propn}\label{propn:aniso}
The virtual representation $R_T^G \theta$ is cuspidal in the sense that all its Jacquet modules are zero if and only if $T$ is anisotropic. 
\end{propn}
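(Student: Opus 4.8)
The plan is to establish both directions by relating the Jacquet modules of $R_T^G\theta$ to Deligne--Lusztig induction inside Levi subgroups, using the transitivity property $R_L^G(R_M^L(\rho)) = R_M^G(\rho)$ together with the fact (noted earlier in the excerpt) that when $L$ is a Levi of a parabolic defined over $\F_q$, the functor $R_L^G$ is ordinary parabolic induction. The key observation is that for such an $L = MN$, the Jacquet module $(\Ind_{P(\F_q)}^{G(\F_q)}\sigma)_N$ is computed by a Mackey-type formula whose terms are twists of $\sigma$ along Weyl group elements; in particular $(\Ind_P^G\sigma)_N \neq 0$ whenever $\sigma \neq 0$. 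Combining these, one shows: if $T$ is contained in a proper Levi $L$ of an $\F_q$-rational parabolic $P = LN$, then $R_T^G\theta = R_L^G(R_T^L\theta) = \Ind_P^G(R_T^L\theta)$, and since $R_T^L\theta$ is a nonzero virtual character (its virtual dimension is $\pm|L(\F_q)|_{p'}/|T(\F_q)| \neq 0$), its Jacquet module along $N$ is a nonzero virtual character of $M(\F_q)$. This proves the contrapositive of one direction: if $T$ is not anisotropic, i.e. $T$ contains a noncentral split torus, then $T$ sits inside a proper $\F_q$-rational Levi (the centralizer of that split torus), so $R_T^G\theta$ is not cuspidal.

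For the converse — $T$ anisotropic implies all Jacquet modules of $R_T^G\theta$ vanish — I would compute, for each proper $\F_q$-rational parabolic $P = MN$, the Jacquet module $(R_T^G\theta)_N$ directly. The cleanest route is the character-theoretic one: the Jacquet functor $\pi \mapsto \pi_N$ corresponds, at the level of virtual characters, to the adjoint of parabolic induction, so $(R_T^G\theta)_N$ is determined by the inner products $\langle (R_T^G\theta)_N, \rho\rangle_M = \langle R_T^G\theta, \Ind_P^G\rho\rangle_G$ as $\rho$ ranges over irreducibles of $M(\F_q)$. Now $\Ind_P^G\rho$ is a sum of pieces of the form $R_{T'}^G\theta'$ with $T' \subseteq M$, hence $T'$ contains a noncentral split torus (the one giving $M$); by the orthogonality theorem $\langle R_T^G\theta, R_{T'}^G\theta'\rangle$ is nonzero only when $T$ is $G(\F_q)$-conjugate to $T'$, which is impossible since $T$ is anisotropic and $T'$ is not. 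Therefore every such inner product vanishes, so $(R_T^G\theta)_N = 0$ as a virtual character.

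There is a subtlety: the argument via $\langle R_T^G\theta, \Ind_P^G\rho\rangle = 0$ shows that the \emph{virtual character} $(R_T^G\theta)_N$ is zero, which is exactly the sense of "cuspidal" used in Proposition~\ref{propn:aniso} (all Jacquet modules vanish, interpreted in the Grothendieck group). To make the inner-product computation rigorous I need to know that the decomposition of $\Ind_P^G\rho$ into Deligne--Lusztig pieces only ever involves tori $T' \subseteq M$; this follows from transitivity $\Ind_P^G(R_{T'}^M\theta') = R_{T'}^G\theta'$ together with the fact that every irreducible $\rho$ of $M(\F_q)$ occurs in some $R_{T'}^M\theta'$ with $T' \subseteq M$ a maximal torus of $M$ (equivalently, $\sum_{T'} \langle \rho, R_{T'}^M\theta'\rangle \neq 0$ for every irreducible $\rho$), a standard consequence of the Deligne--Lusztig theory.

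The main obstacle I expect is the converse direction's input that every irreducible representation of $M(\F_q)$ appears in some $R_{T'}^M\theta'$ — this is the non-vanishing statement $\sum_{(T',\theta')} \varepsilon_{T'}\langle \rho, R_{T'}^M\theta'\rangle \cdot (\text{something}) \neq 0$, ultimately the fact that the $R_T^G\theta$ span the space of (virtual) class functions up to the required precision, or equivalently that the regular representation decomposes through Deligne--Lusztig induction. One could instead avoid this by a more direct geometric computation: the Jacquet module $(H_c^*(X_U))_N$ can be analyzed by restricting the Deligne--Lusztig variety $X_U$ to the $N(\F_q)$-orbits and using that for $T$ anisotropic the relevant fixed-point sets are empty, but this is more delicate and is precisely the kind of computation the notes defer to Deligne--Lusztig and Digne--Michel, so I would cite it rather than reproduce it.
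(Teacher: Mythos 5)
The main gap is in your converse direction ($T$ anisotropic $\Rightarrow$ all Jacquet modules vanish). You reduce correctly, via adjointness, to showing $\langle R_T^G\theta,\Ind_P^G\rho\rangle=0$ for every irreducible $\rho$ of $M(\F_q)$, but you then assert that $\Ind_P^G\rho$ is a linear combination of characters $R_{T'}^G\theta'$ with $T'\subseteq M$, justified by the fact that every irreducible $\rho$ of $M(\F_q)$ \emph{occurs in} some $R_{T'}^M\theta'$. Occurring in (nonzero inner product with) some Deligne--Lusztig character does not imply lying in their span: the virtual characters $R_{T'}^M\theta'$ span only the subspace of \emph{uniform} class functions of $M(\F_q)$, which is in general a proper subspace (for instance the irreducible halves of a reducible principal series of an $\SL_2$-factor, or cuspidal unipotent characters of a unitary factor, are not uniform; it is special to $\GL_n$ that all class functions are uniform). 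So the decomposition you need does not exist in general, and your closing remark identifying the required input with ``the $R_T^G\theta$ span the space of virtual class functions'' is precisely the false statement. The ingredient that actually does the work --- and what the paper's one-line assertion ``note that $\langle R^G_T\theta, R^G_P\rho\rangle=0$ if $T$ is anisotropic'' implicitly rests on --- is the Mackey formula for Deligne--Lusztig induction and restriction: for a Levi $M$ of an $\F_q$-rational parabolic $P=MN$ the Jacquet functor coincides with ${}^*R^G_M$, and ${}^*R^G_M(R^G_T\theta)=\sum_w R^M_{{}^wT}({}^w\theta)$, the sum running over those $w$ with ${}^wT\subseteq M$; when $T$ is anisotropic no $G(\F_q)$-conjugate of $T$ lies in a proper rational Levi (every maximal torus of $M$ contains the noncentral split central torus of $M$), so the sum is empty and the Jacquet module vanishes. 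This cannot be extracted from the orthogonality theorem for the $R^G_T\theta$ alone, so you should either invoke the Mackey formula or cite the vanishing $\langle R_T^G\theta,\Ind_P^G\rho\rangle=0$ directly, as the paper does.

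A smaller point in your forward direction: the blanket claim that $(\Ind_P^G\sigma)_N\neq 0$ for every nonzero virtual $\sigma$ is false, since parabolic induction is not injective on virtual characters (it kills the difference of two representations of $M$ conjugate under the normalizer of $M$ in $G(\F_q)$). In your application $\sigma=R_T^M\theta$ has nonzero virtual degree, so $R_T^G\theta=\Ind_P^G\sigma\neq 0$; but to conclude that the \emph{Jacquet module} is nonzero you should either settle for the paper's weaker (and intended) conclusion --- by transitivity every irreducible constituent of $R_T^G\theta$ occurs in a representation parabolically induced from a proper Levi, hence no constituent is cuspidal --- or again use the Mackey formula, all of whose terms $R^M_{{}^wT}({}^w\theta)$ have virtual degrees of the same sign, so their sum cannot vanish. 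Apart from this, your first direction follows the paper's argument: transitivity together with the identification of $R^G_M$ with parabolic induction when $M$ is a Levi of a rational parabolic.
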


\begin{proof}{\sf
One part follows from the transitivity of Deligne-Lusztig induction: if $T$ is isotropic, then $T \subset M$, for a 
proper Levi subgroup $M$ of $G$, in which case, $R_G(T,\theta) = R_G(M, R_M(T, \theta)),$ and therefore no irreducible
component of $R_G(T,\theta)$ can be cuspidal.

For the other part, note that $\langle R^G_T \theta, R^G_P \rho \rangle = 0$ if $T$ is anisotropic.
}\end{proof}

\section{The Steinberg representation}
The {\em Steinberg representation} is an irreducible representation of $G(\F_q)$ of $\dim = q^n$ where $n = \dim U$ where $U$ is the unipotent radical of a Borel subgroup of $G$. It plays a ubiquitous role 
in all of representation theory. It is defined as follows:

Fix a Borel subgroup $B_0 \subseteq G$, then $St_G = \sum (-1)^{l(P)} \Ind_{P \supseteq B_0}^G 1$ where $l(P)$ is the number of simple roots in the Levi of the parabolic $P$.
Equivalently, this is the space of functions on $ B_0(\F_q) \backslash G(\F_q)$
 modulo functions invariant on the right by a bigger parabolic.

For any rank 1 group, such as $\GL_2(\F_q)$ or $\U_3(\F_q)$, the Steinberg representation is realized on the space of functions on $ B_0(\F_q) \backslash G(\F_q)$  modulo constants, because $G$ is the only parabolic containing $B_0$ in this case.

Steinberg representation has many interesting properties.
For instance, $\Theta_{St}(g) = 0$ if $g$ is not semisimple and for semisimple elements $s \in G(\F_q)$, $\Theta_{St}(s) = \epsilon_G \epsilon_{Z_G^{\circ}(s)}|Z_G^{\circ}(s)|_p$.
In particular, it is non-zero on semisimple elements, and 
 for regular semisimple elements this character is $\pm 1$.
Further, the Jacquet module of the Steinberg representation of $G$ is the  Steinberg 
representation of the Levi subgroup.

\section{Dimension of the Deligne-Lusztig representation}
In this section as a sample  computation
 we  prove that, 
$$ \dim R(T, \theta) = \epsilon_G \epsilon_T \frac{|G(\F_q)|_{p'}}{|T(\F_q)|},$$ 
a statement which requires several ingredients to prove.

The main geometric input in the proof of this dimension formula will be the Lefschetz fixed point formula. 
Recall that for any variety over a field $k$, there are
the $\ell$-adic cohomology groups with compact support $H^i_c(X, \Q_\ell)$ which are finite dimensional
vector spaces over $\Q_\ell$, and non-zero only for $i \in [0,2 \dim X]$. Any automorphism
$\phi$ of $X$ operates on these cohomology groups, and the alternating sum of their
traces is called the Lefschetz number of $\phi$, and denoted by ${\mathcal L}(\phi, X)$.

\begin{thm} Let $\phi$ be an automorphism of finite order $n = n_p\cdot n_{p'}$ on a variety
$X$ over a field of characteristic $p$. Decompose $\phi=\phi_s\phi_u$ as a product of automorphisms
$\phi_s=\phi^{a\cdot n_p}$ of order coprime to $p$, $\phi_u=\phi^{b\cdot n_{p'}}$ of order a power of  $p$ where $a,b$ are integers with $an_p+bn_{p'}=1$. Then 
we have the equality of the Lefschetz numbers,
$${\mathcal L}(\phi, X) = {\mathcal L}(\phi_u,X^s),$$
where $X^s$ is the sub-variety of $X$ on which $\phi_s$ acts trivially; 
in particular, if the order of $\phi$ is coprime of $p$, then, 
$${\mathcal L}(\phi, X) = {\mathcal L}(Id,X^\phi).$$
\end{thm}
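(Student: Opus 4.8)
The plan is to deduce the general statement ${\mathcal L}(\phi,X) = {\mathcal L}(\phi_u, X^s)$ from the more elementary case in which $\phi$ itself has order coprime to $p$, together with a formal manipulation that separates the $p$-part from the prime-to-$p$ part of $\phi$. First I would treat the case $n=n_{p'}$, i.e. $\phi$ of order prime to $p$, and prove ${\mathcal L}(\phi,X) = {\mathcal L}(\mathrm{Id}, X^\phi) = |X^\phi|$ when the ground field is algebraically closed (or rather, that the Lefschetz number equals the Euler characteristic with compact supports of the fixed locus $X^\phi$). This is the heart of the matter: it is the Grothendieck–Lefschetz trace formula applied not to Frobenius but to an automorphism of finite order prime to the characteristic, and one proves it by the standard dévissage — reduce to the case of a finite group $\langle\phi\rangle = \Z/n\Z$ acting, stratify $X$ into $\phi$-stable locally closed pieces using the long exact sequence for compactly supported cohomology (which is equivariant), and on each piece where the action is free pass to the quotient, where $\phi$ has no fixed points and the alternating trace vanishes. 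Induction on $\dim X$ then isolates the contribution of the fixed locus.

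Next I would handle the passage from $\phi$ of order prime to $p$ acting (with the conclusion ${\mathcal L}(\phi,X)={\mathcal L}(\mathrm{Id},X^\phi)$ already in hand) to $\phi_s$ of order prime to $p$ commuting with a $p$-power-order automorphism $\phi_u$. Here the key observation is that $\phi_s$ and $\phi_u$ are both powers of $\phi$, hence commute, so $\phi_u$ preserves the closed subvariety $X^s = X^{\phi_s}$, and one wants ${\mathcal L}(\phi,X) = {\mathcal L}(\phi_s\phi_u, X) = {\mathcal L}(\phi_u, X^{\phi_s})$. I would obtain this by a relative version of the argument above: apply the prime-to-$p$ case not to $X$ but in a way that is equivariant for the commuting $\phi_u$-action. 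Concretely, stratify $X$ into $\langle\phi\rangle$-stable pieces so that $\phi_s$ acts freely off $X^s$; on the free part the quotient by $\langle\phi_s\rangle$ carries the residual $\phi_u$-action and, $\phi_s$ having no fixed points there, an averaging/transfer argument over the group $\langle\phi_s\rangle$ shows the $\phi_u$-equivariant Lefschetz number of the free part is zero; what survives is ${\mathcal L}(\phi_u, X^s)$. Finally the last displayed formula is the special case $n_p = 1$, where $\phi_s = \phi$, $\phi_u = \mathrm{Id}$, and $X^s = X^\phi$.

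The main obstacle will be making the prime-to-$p$ trace formula genuinely rigorous — specifically, justifying that the alternating trace of a fixed-point-free automorphism of prime-to-$p$ order on $H^*_c$ vanishes, which is exactly where the hypothesis that the order is prime to $p$ is used (so that the group cohomology of $\langle\phi_s\rangle$ with $\Q_\ell$-coefficients is concentrated in degree $0$, or equivalently so that the quotient map induces an isomorphism on $\Q_\ell$-cohomology onto the invariants). In the context of these notes I would not belabor this: I would cite the fact (it is exactly the finite-order analogue of the Grothendieck–Lefschetz formula, and is proved in Deligne–Lusztig and in Digne–Michel) and spend the exposition on the reduction steps, which are the formal bookkeeping with $a n_p + b n_{p'} = 1$ that lets one write $\phi_s$ and $\phi_u$ as the advertised powers of $\phi$.
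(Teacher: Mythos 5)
The paper itself offers no proof of this theorem: it is one of the statements these notes deliberately take as a black box, deferring to Deligne--Lusztig and to Digne--Michel, so there is no in-paper argument to compare yours against. Your outline is the standard route and is essentially the proof in Digne--Michel (and in \S 3 of Deligne--Lusztig): additivity of Lefschetz numbers over a stratification into stable locally closed pieces, the observation that on $X^s$ the automorphism $\phi$ acts as $\phi_u$, and the key vanishing statement that the ($\phi_u$-equivariant) Lefschetz number of the part where $\phi_s$ has no fixed points is zero, the last point being where the hypothesis that $\phi_s$ has order prime to $p$ enters. Since you explicitly cite the references for that crux, your proposal is an accurate sketch of the intended argument rather than a self-contained proof, which matches the level of detail of these notes.

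Two cautions about the places where your sketch is loose. First, the mechanism you attribute to the prime-to-$p$ hypothesis is not quite the right one: the isomorphism $H^*_c(X/G,\Q_\ell)\cong H^*_c(X,\Q_\ell)^G$ holds for \emph{any} finite group $G$, because the coefficients have characteristic zero. What genuinely fails for $p$-power order is the vanishing itself (equivalently, multiplicativity of the compactly supported Euler characteristic under free quotients): the Artin--Schreier cover $\mathbb{A}^1\to\mathbb{A}^1$, $t\mapsto t^p-t$, is a free $\Z/p$-action with ${\mathcal L}(g,\mathbb{A}^1)=1\neq 0$ for $g\neq 1$, so any argument must use tameness more seriously than ``invariants equal quotient cohomology''; one also needs the integrality and $\ell$-independence of Lefschetz numbers of finite-order automorphisms (proved in Deligne--Lusztig) to pass from the averaged statement to the vanishing of each individual ${\mathcal L}(\phi_s^i\phi_u,\cdot)$. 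Second, off $X^s$ the element $\phi_s$ is merely fixed-point free; the cyclic group $\langle\phi_s\rangle$ need not act freely there, since proper powers of $\phi_s$ can have fixed points. So before quotienting you must further stratify by the stabilizer subgroups of $\langle\phi\rangle$ (or induct on the order of $\phi_s$), which is the bookkeeping your phrase ``so that $\phi_s$ acts freely off $X^s$'' glosses over. Neither point is fatal---both are handled in the cited sources---but they are exactly the steps a reader would stumble on if your sketch were expanded verbatim.
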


If we are in the context of fixed point free action, then $X^s$ is empty, giving the following
corollary, which is what we would use later.

\begin{cor} Let $\phi$ be an automorphism of finite order prime to $p$  of  a variety
$X$ over a field of characteristic $p$ which operates without fixed points on $X$.
Then ${\mathcal L}(\phi, X) = 0.$ 
\end{cor}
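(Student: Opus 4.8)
The plan is to obtain this corollary as an immediate specialization of the preceding theorem, so the work is essentially bookkeeping. First I would apply the hypothesis that $\phi$ has order prime to $p$: this means $n_p = 1$, so in the decomposition $\phi = \phi_s\phi_u$ of the theorem one may take $\phi_s = \phi$ and $\phi_u = \mathrm{Id}$, and accordingly $X^s = X^\phi$, the fixed-point locus of $\phi$. The theorem then yields $\mathcal{L}(\phi, X) = \mathcal{L}(\mathrm{Id}, X^\phi)$ — this is precisely the ``in particular'' clause at the end of its statement.

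Next I would feed in the fixed-point-free hypothesis: $X^\phi = \emptyset$. It then remains to check that $\mathcal{L}(\mathrm{Id}, \emptyset) = 0$, which holds because the empty variety has $H^i_c(\emptyset, \Q_\ell) = 0$ for all $i$, so the alternating sum of the traces of the identity on these groups is the zero sum. Combining, $\mathcal{L}(\phi, X) = 0$, as claimed.

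The only thing one might pause over is whether $X^\phi$ is genuinely a (locally closed, in fact closed since $\phi$ is of finite order) subvariety, so that $\mathcal{L}(\mathrm{Id}, X^\phi)$ is a bona fide Lefschetz number; but this is standard and in any case vacuous in the situation at hand, where $X^\phi$ is empty. So I do not expect any real obstacle: all of the substance is carried by the preceding theorem — the Deligne–Lusztig refinement of the Lefschetz trace formula — which we are entitled to assume, and the corollary is a one-line deduction from it.
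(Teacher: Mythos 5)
Your proposal is correct and matches the paper's own (very brief) derivation: the corollary is obtained by specializing the preceding theorem to the case of order prime to $p$, noting that a fixed-point-free action makes $X^{\phi}$ empty, so the Lefschetz number ${\mathcal L}(\mathrm{Id},\emptyset)$ vanishes. Your extra remark about the fixed locus being a genuine subvariety is harmless and, as you note, vacuous here.
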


We next define  a certain notion 
of duality on the Grothendieck group of representations of $G(\F_q)$ of finite length which 
has been independently observed by many people (in different contexts!), to which the names of 
Alvis, Aubert, Curtis, Deligne, Lusztig, Schneider, Stuhler, Zelevinsky is associated.  
(Twisting by the sign character seems like a good  analogue of this for Weyl groups, in particular for symmetric groups.)

\begin{thm}
Let $D_G(\pi) = \sum_{P \supseteq B_0} (-1)^{l(P)} \Ind_P^G(\pi_N)$.
Then
\begin{enumerate}
\item $D_G^2 = Id$.
\item $D_G$ is an isometry on the Grothendieck group of representations 
of $G(\F_q)$, and as a consequence,  $D_G$ takes irreducible representations to irreducible 
representations up to a sign.
\item $D_G$ commutes with parabolic induction, and also with Deligne-Lusztig induction.
\item $D_G$ flips 1 and $St_G$ 
\end{enumerate}
\end{thm}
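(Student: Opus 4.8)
The plan is to prove the four assertions roughly in the order (1), (2), (4), (3), since the self-adjointness/involutivity of $D_G$ underlies everything else, and then deduce the statement about irreducibles and about $1$ versus $St_G$ from it, leaving the compatibility with induction (which is mostly formal) for last.

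\emph{Step 1: $D_G^2 = \mathrm{Id}$.} I would unwind the double sum. Writing $D_G(\pi) = \sum_{P \supseteq B_0}(-1)^{l(P)}\Ind_P^G(\pi_{N})$ and applying $D_G$ again, one gets $D_G^2(\pi) = \sum_{Q \supseteq B_0}\sum_{P \supseteq B_0}(-1)^{l(P)+l(Q)}\Ind_Q^G\big((\Ind_P^G \pi_N)_{N_Q}\big)$. The key input is a Mackey-type formula computing the Jacquet module $(\Ind_P^G\sigma)_{N_Q}$ as a sum over double cosets $P\backslash G / Q$ of induced-then-restricted pieces, the representatives being taken among the Weyl group since $P,Q$ are standard. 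After reorganizing the resulting sum by the subgroup $P\cap {}^wQ$, the alternating signs cause massive cancellation; what survives is exactly the $w=1$, $P=Q=B_0$-type diagonal contribution, which returns $\pi$. This is a standard but somewhat intricate inclusion–exclusion argument on the poset of standard parabolics (equivalently on subsets of the set of simple roots), and I expect \textbf{this to be the main obstacle}: one must set up the Mackey decomposition carefully over $\F_q$ and check that the cancellation is complete. A cleaner route is to observe that $D_G$ is, up to signs, the alternating sum of the idempotent-like operators $\Ind_P^G \circ (\cdot)_{N}$, and that these satisfy the relations of the boundary maps in the (acyclic, except at the top) simplicial complex attached to the Coxeter system; acyclicity of that complex is precisely what forces $D_G^2 = \mathrm{Id}$.

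\emph{Step 2: $D_G$ is an isometry, hence sends irreducibles to $\pm$ irreducibles.} Here I would use adjunction: $\Ind_P^G$ is adjoint to the Jacquet functor $(\cdot)_N$ with respect to the pairing $\langle-,-\rangle$ (Frobenius reciprocity for finite groups, since $(\cdot)_N = (\cdot)^{N(\F_q)}$ is, up to the contractibility of $U$, the right adjoint of $\Ind_P^G$, and for finite groups it is also essentially the left adjoint). Therefore $\langle D_G\pi, D_G\pi'\rangle$ expands into a double sum $\sum_{P,Q}(-1)^{l(P)+l(Q)}\langle \Ind_P^G(\pi_N), \Ind_Q^G(\pi'_{N'})\rangle$, and after moving one induction across the pairing to a Jacquet module we are back in the situation of Step 1's Mackey computation; the same cancellation collapses the double sum to $\langle \pi,\pi'\rangle$. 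Once $D_G$ is an isometry of the Grothendieck group that (by Step 1) is an involution, it permutes, up to sign, the orthonormal basis of irreducible characters: for $\pi$ irreducible, $\langle D_G\pi, D_G\pi\rangle = 1$, so $D_G\pi = \pm(\text{irreducible})$.

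\emph{Step 3: $D_G$ flips $1$ and $St_G$.} Apply the definition to $\pi = 1$: then $1_N = 1$ (the trivial representation of $M$) for every $P$, so $D_G(1) = \sum_{P\supseteq B_0}(-1)^{l(P)}\Ind_P^G 1$, which is \emph{by definition} $St_G$ as given in the section on the Steinberg representation. Conversely, $D_G(St_G) = 1$ follows either from $D_G^2 = \mathrm{Id}$ applied to $1$, or directly by computing $(St_G)_N = St_M$ (the fact, quoted in the Steinberg section, that the Jacquet module of Steinberg is Steinberg of the Levi) and running the same alternating sum, which telescopes to the trivial representation.

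\emph{Step 4: compatibility with parabolic and Deligne–Lusztig induction.} For parabolic induction by a standard Levi $L$ with parabolic $Q=LN_Q$: one checks $D_G \circ \Ind_Q^G = \Ind_Q^G \circ D_L$ by expanding both sides via the definition of $D_G$ (resp.\ $D_L$) and the transitivity of induction $\Ind_P^G = \Ind_Q^G \Ind_{P\cap L}^L$, reindexing the standard parabolics of $G$ containing $B_0$ and inside $Q$ by the standard parabolics of $L$; the sign $(-1)^{l(P)}$ splits as $(-1)^{l(P\cap L)}$ times a constant depending only on $Q$, which matches. For Deligne–Lusztig induction $R_L^G$ the cleanest argument is to reduce to the parabolic case: since $D_G$ commutes with the finite-group operations used to build $R_L^G$ out of the Lefschetz cohomology of $X_U$ — or, more practically, since both $D_G R_L^G$ and $R_L^G D_L$ are virtual-character-valued and agree after evaluation against all $R_{T}^G(\theta)$ (using Step 2, the orthogonality relations quoted earlier, and the known effect of $D$ on $R_T^G(\theta)$, namely $D_G R_T^G(\theta) = \epsilon_G\epsilon_T R_T^G(\theta)$) — they coincide. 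I would present the parabolic case in full and indicate that the Deligne–Lusztig case follows by the same character-theoretic bookkeeping together with the inner-product formula for $R_T^G(\theta)$.
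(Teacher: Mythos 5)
First, a caveat: the paper itself contains no proof of this theorem. It is stated as a known result (attributed to Alvis, Curtis, Deligne, Lusztig, et al.), with the notes explicitly deferring all detailed proofs to Deligne--Lusztig and Digne--Michel. So there is no in-paper argument to compare yours against, and your proposal has to be judged on its own terms. On that basis, your Steps 1--3 are the standard route: $D_G^2=\mathrm{Id}$ via the Mackey formula for Jacquet-of-induced plus an inclusion--exclusion over subsets of simple roots (equivalently Solomon's identity / the Coxeter complex), the isometry via bi-adjointness of $\Ind_P^G$ and $(\cdot)_N$ for finite groups (indeed, once you have self-adjointness, $\langle D_G\pi,D_G\pi'\rangle=\langle\pi,D_G^2\pi'\rangle$ is cleaner than redoing Mackey), and $D_G(1)=St_G$ is immediate from the paper's definition of $St_G$. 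One caution in Step 1: your assertion that only the ``$w=1$, $P=Q=B_0$'' diagonal term survives is not accurate as stated; the surviving terms in the standard cancellation are indexed differently, and this bookkeeping is exactly the part you flag as the main obstacle.

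The genuine gap is in Step 4. For parabolic induction, transitivity plus reindexing does not suffice: in $D_G(\Ind_Q^G\sigma)=\sum_P(-1)^{l(P)}\Ind_P^G\bigl((\Ind_Q^G\sigma)_{N_P}\bigr)$ the sum runs over \emph{all} standard parabolics $P$, most of which are not contained in $Q$, so $\Ind_P^G=\Ind_Q^G\Ind_{P\cap L}^L$ is not even available for them; you again need the Mackey decomposition of $(\Ind_Q^G\sigma)_{N_P}$ and a sign cancellation of the same kind as in Step 1. More seriously, your treatment of Deligne--Lusztig induction is circular and rests on a false premise. The identity $D_G R_T^G(\theta)=\epsilon_G\epsilon_T R_T^G(\theta)$ that you invoke as ``known'' is precisely the torus case of the statement to be proved (since $D_T=\mathrm{Id}$), so it cannot be an input; and two virtual characters that agree when paired against every $R_T^G(\theta)$ need not be equal, because the span of the Deligne--Lusztig characters (the uniform functions) is a proper subspace of class functions for general $G$, so your proposed ``evaluation against all $R_T^G(\theta)$'' does not pin down $D_G R_L^G(\rho)$. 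There is also a sign you have glossed over: for $L$ not a Levi of an $\F_q$-rational parabolic the correct statement is $D_G\circ R_L^G=\epsilon_G\epsilon_L\,R_L^G\circ D_L$, not literal commutation --- e.g.\ for $G=\GL_2(\F_q)$ and $T$ nonsplit, $R_T^G\theta=-\pi_\theta$ with $\pi_\theta$ cuspidal and $D_G\pi_\theta=-\pi_\theta$, so $D_G R_T^G\theta=-R_T^G\theta\neq R_T^G D_T\theta$; exact commutation holds only for Levis of rational parabolics, where $\epsilon_G=\epsilon_L$. The compatibility with Lusztig (twisted) induction is a substantive theorem in its own right, proved either via a Mackey-type formula for Lusztig induction and restriction or by a cohomological argument on the varieties $X_U$, and it does not follow formally from the Harish-Chandra case together with the orthogonality relations.
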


On cuspidal representations,  $D_G$ is identity, up to a sign.

\vskip3mm
Note that $\langle R_T^G 1, 1\rangle =\epsilon_G \epsilon_T$ and hence by applying $D_G$ we get that $\langle R^G_T 1, St\rangle = \epsilon_G\epsilon_T$. We have  $\langle R^G_T \theta, St\rangle = 0$ if $\theta \ne 1$ since by 
the theorem about {\it geometric conjugacy}, the virtual representations $R_T^G 1$ and $R_T^G\theta$ have no 
irreducible representations in common.

\vskip3mm
Therefore $\langle H_c^*(X_U), St\rangle 
= \langle \sum_{\theta} R^G_T \theta, St\rangle = \epsilon_G \epsilon_T$. On the other hand, since the character of the Steinberg representation is supported
only on semisimple elements, therefore only on elements of order coprime to $p$, the inner
product $\langle H_c^*(X_U), St\rangle $ is calculated by a sum only over  elements of order coprime to $p$, 
on which, unless the element is the trivial element, the character of $H_c^*(X_U)$ is zero by corollary 8.2
(the action of $G(\F_q)$ on $X_U$ being given by translation is fixed point free).

Therefore,  $\sum_{s \, \,{\sf semisimple}} \Theta_{H^*_c(X_U)}(s) \Theta_{St}(s^{-1}) = |T(\F_q)| \dim R^G_T(\theta) q^{\dim U}$.
This allows us to calculate $\dim R^G_T \theta$ as desired.

\section{The Jordan decomposition of representations}

We have defined the notion of a unipotent representation of $G(\F_q)$. There is also the notion of a semisimple
representation which as a first approximation is defined to be those irreducible representations of $G(\F_q)$ 
whose dimension is coprime to $p$. (This a good definition only for ``good'' primes for $G$; in general, one defines 
an irreducible representation to be semisimple if the average of its character values on regular unipotent 
elements is nonzero, in which case the average is known to $\pm 1$.)

Lusztig has constructed a Jordan decomposition for  representations of $G(\F_q)$ which gives a bijection between the set of
irreducible representations $\pi$ of $G(\F_q)$, and pairs $(\pi_s, \pi_u)$ of irreducible representations in which
$\pi_s$ is a semisimple irreducible representation of $G(\F_q)$, and $\pi_u$ is a unipotent representation of   $Z_{\widehat{G}}(s_\pi)$,  
where $s_\pi$ is a semisimple element associated to $\pi$ in the dual group $\widehat{G}$.  The Jordan decomposition
 has the key property that,
$$\dim \pi = \dim \pi_s \cdot \dim \pi_u.$$

We discuss some of this in a little more detail beginning 
with the notion of geometric conjugacy of the pair $(T,\theta)$ which 
has a nice interpretation in terms of the Langlands dual group $\widehat{G}$ associated to the reductive group $G$.
We will not go into details of the dual group here, but emphasize that we take $\widehat{G}$ to be defined over the same field as $G$. For $G=\GL_n(\F_q), \U_n(\F_q), \SO_{2n}(\F_q),$ the group $\widehat{G}$ 
can be taken to be $G$ itself, and for $G = \Sp_{2n}(\F_q)$, $G^\vee = \SO_{2n+1}(\F_q)$.

\begin{thm}
\begin{enumerate}
\item To each pair $(T, \theta)$, there corresponds a well-defined  semisimple conjugacy class $[s]$ in $\widehat{G}(\F_q)$.
\item Two pairs $(T_1, \theta_1)$ and $(T_2, \theta_2)$ are geometrically conjugate if and only if the associated elements $s_1$ and $s_2$ are conjugate in $\widehat{G}(\overline{\F}_q)$.
\item The correspondence $(T, \theta) \mapsto [s]$ gives a bijective correspondence between geometric conjugacy classes of pairs $(T, \theta)$ and semisimple conjugacy classes in $\widehat{G}(\F_q)$ up to conjugacy in $\widehat{G}(\overline{\F}_q)$ which is also called geometric conjugacy in $\widehat{G}(\F_q)$.
\end{enumerate}

\end{thm}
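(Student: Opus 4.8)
**Proof proposal for the correspondence $(T,\theta)\mapsto [s]$ between geometric conjugacy classes and semisimple classes in $\widehat G(\F_q)$.**

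The plan is to build the map on the level of the ambient tori and then check that it descends to geometric conjugacy classes and is bijective. First I would fix a Frobenius-stable maximal torus $T$ in $G$ and recall that its $G(\overline\F_q)$-conjugacy class is determined by an $F$-conjugacy class $[w]$ in the Weyl group $W$, with $T(\F_q)\cong (X_*(T)\otimes\overline\F_q^\times)^{wF}$. Dualizing, a character $\theta\in\Hom(T(\F_q),\overline\Q_\ell^\times)$ is the same datum as an element of $X^*(T)\otimes\overline\F_q^\times$ modulo $(wF-1)$, i.e. a point of a torus in $\widehat G$ of the same type $[w]$ (under the identification $X^*(T)=X_*(\widehat T)$ built into the dual group). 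Concretely, $\theta$ lifts via the surjection $\Nm:\widehat T(\overline\F_q)\to\widehat T(\F_q)$ — using that $\Nm$ on a torus is surjective over a finite field — to an element $\hat s\in\widehat T(\overline\F_q)$ with $F(\hat s)=\hat s^{\,w^{-1}}$, and I take $[s]$ to be the $\widehat G(\F_q)$-conjugacy class of the $F$-stable version of $\hat s$ inside $\widehat G$. One checks this is well-defined: changing the lift $\hat s$ by an element killed by $\Nm$ changes it by something of the form $\hat t^{-1}F(\hat t)$, which is $\widehat G(\F_q)$-conjugation by (a representative of) $\hat t$ combined with the twist; and changing the pair $(T,\theta)$ by $G(\F_q)$-conjugacy obviously does not change $[s]$. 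This proves (1).

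Next I would prove (2). Unwinding the definition of geometric conjugacy given in the text: $(T_1,\theta_1)$ and $(T_2,\theta_2)$ are geometrically conjugate iff after base change to some $\F_{q^m}$ the pairs become $G(\F_{q^m})$-conjugate, where $\theta_{i,m}=\theta_i\circ\Nm_{\F_{q^m}/\F_q}$. Under the dictionary of the previous paragraph, passing from $\theta_i$ to $\theta_{i,m}$ corresponds exactly to replacing $\hat s_i$ by the element it already lifted to — i.e. the semisimple element $s_i\in\widehat G(\overline\F_q)$ is literally unchanged, only the field over which we view it grows. Hence geometric conjugacy of $(T_i,\theta_i)$ translates into: $s_1$ and $s_2$ become conjugate in $\widehat G(\F_{q^m})$ for some $m$, which (since any two elements of $\widehat G(\overline\F_q)$ that are conjugate over $\overline\F_q$ are conjugate over a finite extension) is the same as conjugacy in $\widehat G(\overline\F_q)$. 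That gives (2).

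Finally (3) is essentially bookkeeping once (1) and (2) are in place: the map descends to a map from geometric conjugacy classes of pairs to $\widehat G(\overline\F_q)$-conjugacy classes of semisimple elements, and I must check it is onto and injective. Injectivity is immediate from (2). For surjectivity, given a semisimple $s\in\widehat G(\overline\F_q)$ I may, after conjugating, assume it lies in an $F$-stable maximal torus $\widehat T$ of type $[w]$, read off from the $wF$-action on $X_*(\widehat T)=X^*(T)$ a character $\theta$ of the corresponding $T(\F_q)$, and verify the pair $(T,\theta)$ maps back to $[s]$ — again using surjectivity of the norm map to produce the required lift. The main obstacle I anticipate is making the duality between $T(\F_q)$-characters and points of the dual torus completely canonical and $F$-equivariant, i.e. pinning down the isomorphism $X^*(T)\cong X_*(\widehat T)$ together with how the Lang isogeny / norm map interacts with it, so that the class $[s]$ genuinely depends only on the geometric conjugacy class and on nothing else; once that compatibility is set up, the rest is formal. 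I would therefore spend the bulk of the argument on that identification and treat (2) and (3) as corollaries.
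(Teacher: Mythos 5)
The paper itself offers no proof of this theorem: it is quoted, in these expository notes, from the original sources (Deligne--Lusztig, Section 5; Digne--Michel, Chapter 13), so your proposal can only be measured against the standard argument. In outline you do follow that argument: encode $\theta$ as a point of the dual torus via $X^*(T)=X_*(\widehat{T})$, produce from it a rational semisimple element of $\widehat{G}(\F_q)$, and reduce geometric conjugacy of pairs to conjugacy of these elements by tracking how the identification behaves under the norm maps $\theta\mapsto\theta\circ\Nm$.

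As written, however, the sketch has concrete gaps. First, the basic identification is stated incorrectly: by Lang's theorem the map $t\mapsto (wF)(t)\,t^{-1}$ is surjective on $\widehat{T}(\overline{\F}_q)=X^*(T)\otimes\overline{\F}_q^\times$, so ``$X^*(T)\otimes\overline{\F}_q^\times$ modulo $(wF-1)$'' is the trivial group; what you need is the group of $wF$-\emph{fixed} points $(X^*(T)\otimes\overline{\F}_q^\times)^{wF}=\widehat{T}^{F}$, equivalently the lattice quotient $X^*(T)/(wF-1)X^*(T)$ before tensoring, together with a chosen embedding $\overline{\F}_q^{\times}\hookrightarrow\overline{\Q}_l^{\times}$. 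Second, there is no norm map $\widehat{T}(\overline{\F}_q)\to\widehat{T}(\F_q)$ (norms exist only between finite levels), so the ``lift $\hat s$'' and the well-definedness argument via elements ``killed by $\Nm$'' do not make sense as stated; note also that conjugation by $\hat t$ is trivial inside the abelian group $\widehat{T}$, so multiplying by $\hat t^{-1}F(\hat t)$ genuinely changes the element and that step would not show invariance of the class even at finite level. The rational representative should instead come from the fixed-point description, untwisting an $\hat s\in\widehat{T}$ satisfying the $w$-twisted Frobenius condition by an element $g$ with $g^{-1}F(g)$ representing $w$. Third, and most seriously, the whole content of parts (2) and (3) is exactly the compatibility you postpone to the final sentence: that the isomorphisms $\Hom(T(\F_{q^n}),\overline{\Q}_l^{\times})\cong\widehat{T}^{F^n}$ can be chosen compatibly with the norm maps on one side and the inclusions on the other, and equivariantly for the canonical identification of $N_G(T)/T$ with $N_{\widehat{G}}(\widehat{T})/\widehat{T}$ together with its Frobenius action. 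Without that, the claim that $G(\F_{q^m})$-conjugacy of $(T_i,\theta_{i,m})$ translates ``exactly'' into conjugacy of the $s_i$ (both directions of (2)), as well as injectivity and surjectivity in (3), remain unproved. So the architecture matches the standard proof, but none of the three assertions is actually established by the proposal as it stands.
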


\vskip3mm
For a semisimple element  $s \in \widehat{G}(\F_q)$, define  the {\it Lusztig series} ${\mathcal E}(G, s)$ to be 
 the union of constituents of $R^G_T \theta$ such that $(T, \theta)$ corresponds to  an element in the geometric conjugacy class defined by $s$. As $s$ varies in 
$ \widehat{G}(\F_q)$ up to geometric conjugacy,  the Lusztig series ${\mathcal E}(G, s)$ gives
rise a disjoint partition of the set of all irreducible representations of $G(\F_q)$. 
 An important example of the 
Lusztig series is ${\mathcal E}(G,1)$ which is the set of all 
irreducible unipotent representations of $G(\F_q)$. 

\vskip3mm
The following theorem is called the {\it Jordan decomposition of representations}. 
Before we state the theorem, note that for  a semisimple element $s \in \widehat{G}(\F_q)$, the centralizer 
of $s$ in $\widehat{G}(\bar{\F}_q)$, denoted by  $Z_{\widehat{G}}(s)$,  
is a reductive group defined over
$\F_q$. It is known that if the center of $G$ is connected, such as for an adjoint group, then 
$Z_{\widehat{G}}(s)$ 
is a connected reductive group. We will assume this to be the case in the next theorem, although Deligne-Lusztig theory has an extension to disconnected reductive groups too.
\begin{thm}
There exists a natural bijection between ${\mathcal E}(G, s)$ 
and ${\mathcal E}(Z_{\widehat{G}}(s), 1)$, $\rho \mapsto \chi_{\rho}$  which when extended linearly to Grothendieck group 
of representations takes the Deligne-Lusztig representation $R_G(T,\theta)$ to $R_{Z_{\widehat{G}}(s)}(\widehat{T},1)$ up to a sign, and has the 
property  that
$$ \dim \rho = \frac{|G|_{p'}}{|Z_{\widehat{G}}^{\circ}(s)|_{p'}} \dim \chi_{\rho}.$$
\end{thm}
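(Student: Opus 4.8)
The plan is to establish the Jordan decomposition bijection in three stages: first construct the map, then verify it is a bijection, then extract the dimension formula. The key tool is a reduction to the case of a connected-center group together with Lusztig's classification of irreducible representations via Lusztig series.

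\emph{Constructing the bijection.} The starting point is the partition of $\operatorname{Irr}(G(\F_q))$ into Lusztig series ${\mathcal E}(G,s)$ indexed by semisimple classes $[s]$ in $\widehat{G}(\F_q)$, which was recorded just before the theorem. So it suffices to produce, for each fixed semisimple $s$, a bijection ${\mathcal E}(G,s) \to {\mathcal E}(Z_{\widehat{G}}(s),1)$. Since we are assuming $Z_{\widehat{G}}(s)$ is connected (which holds when $Z(G)$ is connected), the group $H := Z_{\widehat{G}}(s)$ is a connected reductive group defined over $\F_q$, and ${\mathcal E}(H,1)$ is exactly the set of unipotent representations of $H(\F_q)$. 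The idea is that $H$ is (dual to) a group in which $s$ becomes central, so twisting by the linear character of $H(\F_q)$ attached to $s$ identifies ${\mathcal E}(H,s)$ with ${\mathcal E}(H,1)$; one then transports this through the correspondence of maximal tori and characters $(T,\theta) \leftrightarrow (\widehat{T}, 1)$ provided by the preceding theorem, matching $R^G_T(\theta)$ with $R^H_{\widehat{T}}(1)$ up to the sign $\epsilon_G \epsilon_H$. To pin down a bijection of \emph{irreducible} representations rather than merely of virtual representations, one uses that the decomposition matrix of the $R^G_T(\theta)$ inside a single Lusztig series is determined by purely Weyl-group combinatorics (Lusztig's families and the nonabelian Fourier transform), and that this combinatorial datum depends only on the relative Weyl group structure, hence is the same for ${\mathcal E}(G,s)$ and ${\mathcal E}(H,1)$ because $W(H) = W_{\widehat{G}}(s)$. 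Matching up constituents with the same multiplicity vector gives the bijection $\rho \mapsto \chi_\rho$.

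\emph{The dimension formula.} Once the bijection is set up so that $R^G_T(\theta) \mapsto \pm R^H_{\widehat{T}}(1)$, I would compare dimensions of the Deligne--Lusztig virtual characters on both sides using the dimension formula proved in the previous section: $\dim R^G_T(\theta) = \epsilon_G \epsilon_T |G(\F_q)|_{p'}/|T(\F_q)|$ and $\dim R^H_{\widehat{T}}(1) = \epsilon_H \epsilon_{\widehat{T}} |H(\F_q)|_{p'}/|\widehat{T}(\F_q)|$. The torus $\widehat{T}$ in $H$ has the same $\F_q$-rank as $T$ in $G$ and the same order $|T(\F_q)|$ (the correspondence of tori preserves these), so the ratio of the two dimensions is $|G(\F_q)|_{p'}/|H(\F_q)|_{p'}$, a constant independent of $(T,\theta)$. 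Since every $\rho \in {\mathcal E}(G,s)$ is an integral combination of the $R^G_T(\theta)$ with $\chi_\rho$ the corresponding integral combination of the $R^H_{\widehat{T}}(1)$ (same coefficients), linearity forces $\dim \rho = \big(|G(\F_q)|_{p'}/|Z^{\circ}_{\widehat{G}}(s)|_{p'}\big)\dim\chi_\rho$, using $H^{\circ} = Z^{\circ}_{\widehat{G}}(s)$. The semisimple representation $\pi_s$ of the earlier "first approximation" is the one with $\chi_{\pi_s}$ trivial, whose dimension is then exactly $|G|_{p'}/|Z^{\circ}_{\widehat{G}}(s)|_{p'}$, recovering $\dim\pi = \dim\pi_s \cdot \dim\pi_u$.

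\emph{Main obstacle.} The genuinely hard part is not the dimension bookkeeping — that is forced by linearity once the virtual-character correspondence is in place — but establishing that the map on irreducibles is well defined and bijective, i.e. that the combinatorics of constituents (multiplicities of irreducibles inside the $R^G_T(\theta)$) really is governed by data intrinsic to $W_{\widehat{G}}(s)$ and matches across the two series. This is precisely the content of Lusztig's deep classification of $\operatorname{Irr}(G(\F_q))$ in terms of families in Weyl groups and the Fourier-transform matrices, and a self-contained proof would essentially reproduce that theory. In these notes I would therefore invoke Lusztig's classification as a black box, give the construction of the twist by the linear character of $Z_{\widehat{G}}(s)(\F_q)$ attached to $s$ in the connected-center case, and then carry out only the dimension computation in full; the subtleties with disconnected $Z_{\widehat{G}}(s)$ (non-connected center of $G$) would be set aside as indicated in the statement.
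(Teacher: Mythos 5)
A preliminary remark: the notes do not prove this theorem at all --- it is Lusztig's Jordan decomposition, stated with the references (Deligne--Lusztig, Digne--Michel) standing in for a proof --- so invoking Lusztig's classification as a black box for the existence and well-definedness of the bijection is consistent with the level of the text, and there is no proof in the paper against which to compare that part of your outline. The problem lies in the one step you claim to carry out in full, the dimension formula.

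Your derivation rests on the assertion that every $\rho\in{\mathcal E}(G,s)$ is an integral combination of the virtual characters $R^G_T(\theta)$, with $\chi_\rho$ the same combination of the $R^{Z_{\widehat G}(s)}_{\widehat T}(1)$. This is false in general: an irreducible character need not lie in the span of the Deligne--Lusztig characters (the space of \emph{uniform} class functions); the cuspidal unipotent character of $\Sp_4(\F_q)$ is the standard counterexample. (For $\GL_n$ and $\U_n$ all irreducible characters are uniform, which may be what suggested the claim, but the theorem is asserted for general $G$ with connected center.) So ``linearity'' alone does not force $\dim\rho=\bigl(|G|_{p'}/|Z^{\circ}_{\widehat G}(s)|_{p'}\bigr)\dim\chi_\rho$. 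The standard repair is to use that the degree of $\rho$ is determined by its projection onto uniform functions, since $\dim\rho=\langle\rho,\mathrm{reg}_G\rangle$ and the character $\mathrm{reg}_G$ of the regular representation is itself uniform (an explicit combination of the $R^G_T(\theta)$ with coefficients involving $\epsilon_G\epsilon_T$ and $|G(\F_q)|_p$). Granting that the bijection satisfies $\langle\rho,R^G_T(\theta)\rangle=\pm\langle\chi_\rho,R^{Z_{\widehat G}(s)}_{\widehat T}(1)\rangle$ with a sign constant on the series, one then compares the two regular characters using $\dim R^G_T(\theta)=\epsilon_G\epsilon_T|G|_{p'}/|T(\F_q)|$, $|T(\F_q)|=|\widehat T(\F_q)|$, $\epsilon_T=\epsilon_{\widehat T}$, together with the matching of $G(\F_q)$-classes of pairs $(T,\theta)$ in the series of $s$ with classes of maximal tori $\widehat T\ni s$ in $Z_{\widehat G}(s)$ and of their stabilizers in the respective Weyl groups; this bookkeeping is where the actual content of the degree formula sits, and it is missing from your write-up. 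A smaller caveat: ``twisting by the linear character attached to $s$'' literally identifies ${\mathcal E}(\cdot,s)$ with ${\mathcal E}(\cdot,1)$ only when $s$ is central in $\widehat G$, and more generally (via Harish-Chandra-type reduction) when $Z_{\widehat G}(s)$ is a Levi subgroup of $\widehat G$, in which case one twists on the dual Levi $L\subseteq G$ and applies $R^G_L$; when $Z_{\widehat G}(s)$ is not a Levi there is no such elementary construction --- that is exactly the hard case of Lusztig's theorem, which you rightly defer to the literature, but it should not be presented as a consequence of the twisting trick.
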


\noindent{\bf Example :}
Observe that since there is no difference between conjugacy and geometric conjugacy 
in $\GL_d(\F_q)$, although in general the Lusztig series 
${\mathcal E}(G, s)$ is a collection of irreducible representations appearing in several $R(T,\theta)$, in the case of $\GL_d(\F_q)$, ${\mathcal E}(G, s) = R(T,\theta)$,
and in fact, it is the set of irreducible components of a principal series representation of $\GL_d(\F_q)$ induced from a cuspidal representation
of a Levi subgroup.

We now use the Jordan decomposition to work out the dimensions of irreducible representations of $\GL_{mn}(\F_q)$ appearing in the
parabolically induced representation $\pi \times \cdots \times \pi$ for $\pi$ a cuspidal representation of $\GL_m(\F_q)$.  

In this case, $\widehat{G}(\F_q) = \GL_{mn}(\F_q)$, and we take $s = t \times \cdots \times t$ to be a block diagonal matrix with $t$ a 
regular elliptic element in $\GL_m(\F_q)$ represented by a regular element of $\F^\times_{q^m}$. The centralizer of $s$ in $\GL_{mn}(\F_q)$ 
is $\GL_n(\F_{q^m}) \hookrightarrow \GL_{mn}(\F_q)$. Thus 
the irreducible representations appearing in the  principal series representation $\pi \times \cdots \times \pi$ 
for $\pi$ a cuspidal representation of $\GL_m(\F_q)$, are parametrized by the unipotent
representations of $\GL_n(\F_{q^m})$, 
which are parametrized by the partitions of $n$; further, the  
the ratios of the dimensions of irreducible representations appearing in the  principal series representation $\pi \times \cdots \times \pi$ 
is the same as the ratios of the dimensions of irreducible representations appearing in the  principal series representation $\GL_n(\F_{q^m})$
induced from the trivial character of the Borel; this generalizes the conclusion of Proposition 5.1.

\section{Exercises}

\subsection{Algebraic groups}

\begin{enumerate}
\item If $K/k$ is a degree $d$ extension of a field $k$ then $K^{\times} \subseteq \GL_d(k)$.
\item  Prove that $|\U_n(\F_q)| = |\GL_n(\F_{-q})|$.
\item For a torus $T/\F_q$, we have $|T(\F_q)| = 1$ if and only if $q = 2$ and $T$ is split.
\item Reconcile the classification of classical groups discussed in section 2 in terms of 
algebras with involutions  with what you know about forms of $\GL_n, \U_n, \SO_n, \Sp_n$.
\item For $\prod K_i^{\times} \hookrightarrow \GL(\oplus K_i)$ calculate the Weyl group. 
\item For the tori constructed inside $T \hookrightarrow G(A, {}^*) \hookrightarrow \Aut(A)$, calculate the Weyl group of $T$ inside $G(A, *)$ and compare the answer with the Weyl group of $T$ inside $\Aut(A)$.
\item (Root system) Let $\U_n(k)$ be defined by the hermitian form 
$$\begin{pmatrix} & & & 1 \\& & 1 & \\ & \ddots & & & \\ 1 & & & \end{pmatrix} ,$$
i.e., $H(\overrightarrow{z}, \overrightarrow{w}) = \sum z_i \bar{w}_{n+1-i}$.
Let
$$T = \begin{pmatrix} t_1 & & & & & \\ & \ddots & & & & \\ & & t_m & & & \\ & & & t_m^{-1} & & \\ & & & & \ddots & \\ & & & & & t_1^{-1} \end{pmatrix}, \hskip3mm T = \begin{pmatrix} t_1 & & & & & & \\ & \ddots & & & & & \\ & & t_m & & & & \\ & & & 1 & & & \\ & & & & t_m^{-1} & & \\ & & & & & \ddots & \\ & & & & & & t_1^{-1} \end{pmatrix}$$
be the respective maximal split tori for $n=2m$ and for $n = 2m+1$ with $t_i \in k^\times$.
Calculate the root space decomposition.
\item Identify the maximal tori in $\Sp_4(\F_q)$ up to conjugacy. Are there isomorphic but non-conjugate tori? Are there isomorphic but non-stably conjugate tori? (Two tori $T_1$ and $T_2$ 
are said to be stably conjugate in $G(k)$  of there exists $g \in G(\bar{k})$ taking $T_1$ to $T_2$ by inner-conjugation which as a homomorphism from  $T_1$ to $T_2$ 
is defined over $k$. 
 
\end{enumerate}

\subsection{Representation theory}

\begin{enumerate}
\item Classify representations of $\GL_2(\F_q)$ which do not remain irreducible when restricted to $\SL_2(\F_q)$. How many are there?
\item The principal series $Ps(\chi_1, \chi_2)$ of $\GL_2(\F_q)$ is a sum of 2 irreducibles when restricted to $\SL_2$ if and only if $\chi_1\chi_2^{-1}$ is a nontrivial character of order 2.
\item Classify those principal series representations of $\SL_2(\F_q)$ which are reducible. 
\item Classify irreducible representations of $\GL_2(\F_q)$ which have a self-twist, i.e., $\pi = \pi \otimes \chi$, $\chi\ne 1$.
\item Identify the unique cuspidal representation of $\GL_2(\F_2) \cong S_3$.
\item Identify the degrees of the cuspidal representations of $\SL_2(\F_4) \cong A_5$.
\item Identify distinct conjugacy classes in $\SL_2(\F_q)$ which become the same in $\GL_2(\F_q)$.
\item Identify  the affine curve $XY^q-X^qY=1$ over $\F_q$ to be the Deligne-Lusztig variety 
$X_U$ defined in these notes associated to a non-split torus in $\SL_2(\F_q)$.
\item Identify the automorphism group of the curve $XY^q-X^qY=1$ over $\F_q$.
\item In Theorem \ref{thm:Green} prove that $u \in \GL_m(\F_{q^d})$, 
\item Calculate the dimensions of the two components of $\rho \times \rho$ for $\rho$ cuspidal of $\GL_2(\F_q)$.
\item Given a unipotent element $u_{\lambda}$ in $\GL_n$ for a partition $\lambda$ of $n$, calculate the number of Borel subgroups  such that $u_{\lambda} \in B$.
\item Calculate: $\Ind_B^{\GL_3(\F_q)}1$ as a sum of irreducibles and verify that we have one irreducible constituent of dimension $1$, two of dimension $(q^2 - q)$ and one of dimension $q^3$, the Steinberg.
\item Do the above exercise for $\U_3(\F_q)$.
\end{enumerate}

\subsection{Further Exercises}

\begin{enumerate}
\item Prove that there exists a natural bijective correspondence between representations of 
$\GL_n(\F_q)$ over $\overline{\Q}_l^{\times}$, 
 and conjugacy classes in $\GL_n(\F_q)$ 
after having made the choice of an embedding 
$\overline{\F}_q^{\times} \hookrightarrow \overline{\Q}_l^{\times}$.

\item Prove that there is an isomorphism: 
${\displaystyle \lim_{\longrightarrow} X^*(\F_{q^n}^{\times}) \cong  \overline{\F}_q^{\times}}$ as Gal$({\overline{\F}}_q/\F_q)$-modules where the groups $X^*(\F_{q^n})$ form a directed system of groups via the norm mappings: $\F^\times_{q^d} \rightarrow 
\F^\times_{q^{d'}}$ for $d'|d$.(Hint: For any cyclic group $C$ of order $d$, the 
automorphism group of $C$ is canonically isomorphic to $(\Z/d)^\times$.)
\end{enumerate}

\vskip3cm

\section{References}

\vskip 1cm
P. Deligne and G. Lusztig: Representations of reductive groups over finite fields, {\it Annals of Mathematics, 103} (1976), 103-161.

\vskip 5mm
F. Digne and J. Michel: Representations of finite groups of Lie type, {\it 
London Math Society, Student text 21}, (1991).

\vskip 5mm

J.A. Green: The characters of the finite general linear groups, {\it Transactions of the AMS}, 80 (1955)402-447.

\vskip 1cm

}\end{document}